\newtheorem{theorem}{Theorem}[section]
\newtheorem*{theorem*}{Theorem}
\newtheorem*{thm:wcproduct}{Theorem \ref{thm:wcproduct}}
\newtheorem*{thm:LrnChow}{Theorem \ref{thm:LrnChow}}
\newtheorem{proposition}[theorem]{Proposition}
\newtheorem{lemma}[theorem]{Lemma}
\newtheorem{conjecture}[theorem]{Conjecture}
\theoremstyle{definition}
\newtheorem{definition}[theorem]{Definition}
\newtheorem{remark}[theorem]{Remark}
\newtheorem{example}[theorem]{Example}
\def\Z{\mathbb{Z}}
\def\Q{\mathbb{Q}}
\def\R{\mathbb{R}}
\def\C{\mathbb{C}}
\def\P{\mathbb{P}}
\def\Z{\mathbb{Z}}
\def\calA{\mathcal{A}}
\def\calG{\mathcal{G}}
\def\calH{\mathcal{H}}
\def\calL{\mathcal{L}}
\def\calV{\mathcal{V}}
\def\tbI{\widetilde{\mathbf{I}}}
\def\tbJ{\widetilde{\mathbf{J}}}
\def\tI{\widetilde{I}}
\newcommand{\w}{\mathbf{w}}
\def\a{\mathfrak{a}}
\def\b{\mathfrak{b}}
\def\L{\overline{\mathcal{L}}}
\def\M{\overline{\mathcal{M}}}
\def\Lrntrop{L^{r,\text{trop}}_n}
\newcommand{\Lrn}{\L^r_n}
\title{Wonderful compactifications and rational curves with cyclic action}
\author[E. Clader]{Emily Clader}\address{Emily Clader, Department of Mathematics, San Francisco State University}
\email{\url{eclader@sfsu.edu}}
\author[C. Damiolini]{Chiara Damiolini}
\address{Chiara Damiolini, Department of Mathematics, University of Texas at Austin}
\email{\url{chiara.damiolini@austin.utexas.edu}}
\author[S. Li]{Shiyue Li}\address{Shiyue Li, School of Mathematics, Institute for Advanced Study}
\email{\url{shiyue_li@ias.edu}}
\author[R. Ramadas]{Rohini Ramadas}\address{Rohini Ramadas, Department of Mathematics, Warwick Mathematics Institute}
\email{\url{Rohini.Ramadas@warwick.ac.uk}}
\begin{document}

\begin{abstract}We prove that the moduli space of rational curves with cyclic action, constructed in our previous work, is realizable as a wonderful compactification of the complement of a hyperplane arrangement in a product of projective spaces.  By proving a general result on such wonderful compactifications, we conclude that this moduli space is Chow-equivalent to an explicit toric variety (whose fan can be understood as a tropical version of the moduli space), from which a computation of its Chow ring follows. 
\end{abstract}

\maketitle

\section{Introduction}

The moduli space $\L^r_n$ of rational curves with cyclic action was constructed in our previous work \cite{CDHLR} as a generalization of Losev and Manin's moduli space of rational curves with weighted marked points.  In particular, the Losev--Manin space $\L_n$, introduced in \cite{LM}, is a toric variety whose associated polytope is the permutohedron $\Pi_n$, and the torus-invariant subvarieties of $\L_n$ have a modular interpretation as ``boundary strata,'' so one obtains an inclusion- and dimension-preserving bijection between the boundary strata of $\L_n$ and the faces of $\Pi_n$.  This work was generalized by Batyrev and Blume, who in \cite{BB} constructed a toric moduli space $\L^2_n$ of rational curves with involution whose boundary strata are encoded by the faces of the signed permutohedron.  Generalizing the story further, the moduli space $\L^r_n$ parameterizes certain rational curves with an automorphism of order $r$ and weighted orbits.  Although $\L^r_n$ is not toric when $r > 2$, its boundary strata are nevertheless encoded by a polyhedral object: not a polytope, in this case, but a polytopal complex.  In this way, $\L^r_n$ appears to occupy an intriguing middle ground between toric varieties and more general moduli spaces of rational curves.

The goal of the current work is to realize $\L^r_n$ as a wonderful compactification of the complement of a particular arrangement of hyperplanes in $(\P^1)^n$, and in doing so, to give a combinatorial description of its Chow ring.  Wonderful compactifications were introduced by De Concini and Procesi in \cite{DCP} as a way to compactify the complement of an arrangement of hyperplanes in $\P^n$ so that much of the geometry of the compactification is encoded in the combinatorics of the original hyperplane arrangement. The geometry of these spaces has been used to resolve long-standing conjectures in combinatorics like the log-concavity of characteristic polynomials of matroids \cite{AHK} and the Dowling--Wilson top-heavy conjecture \cite{braden2020singular}.  On the other hand, they have also provided a valuable new perspective in geometry; perhaps the most relevant example for the present work is the Deligne--Mumford--Knudsen compactification $\M_{0,n}$, which can be realized as a wonderful compactification of the braid arrangement complement in $\P^{n-3}$, from which one obtains an elegant presentation of its Chow ring.

One way in which to understand the Chow ring in this setting, as shown by Feichtner and Yuzvinsky in \cite{FY}, is as the Chow ring of the toric variety of a fan $\Sigma_{\calG}$ that can be combinatorially associated to a hyperplane arrangement in projective space together with a ``building set'' $\calG$.  In particular, the data of $\calG$ specifies a wonderful compactification $\overline{Y}_{\calG}$ of the arrangement complement, and Feichtner--Yuzvinsky prove that the Chow ring of $\overline{Y}_{\calG}$ is isomorphic to that of the toric variety $X_{\Sigma_{\calG}}$.

The construction of wonderful compactifications was generalized by Li Li in \cite{LiLi} to complements of arrangements of subvarieties in a smooth variety, but some of their combinatorial nature is lost in this generality.  In particular, the geometry of a wonderful compactification $\overline{Y}_{\calG}$ is not determined merely by the intersection combinatorics of the subvarieties in the arrangement---which is what determines $\Sigma_{\calG}$---but by the particular geometry of the subvarieties themselves.  Thus, one should not expect the Chow ring of $\overline{Y}_{\calG}$ to be isomorphic to that of a toric variety in general.

The case $\L^r_n$ of interest for our work is a wonderful compactification of a hyperplane arrangement not in a projective space (as in De Concini--Procesi's original work) but in a product of projective spaces.  Specifically, it is a ``product arrangement'' in the sense that the hyperplanes are pulled back via projection to the individual projective space factors.  We begin by proving that, for arrangements of this form, the Chow ring of the wonderful compactification is still combinatorial: one can associate a fan $\Sigma_{\calG}$ (defined in Definition~\ref{def:nestedsetfan} below) generalizing the fan of Feichtner--Yuzvinsky, and the resulting toric variety has isomorphic Chow ring to $\overline{Y}_{\calG}$. 

\begin{thm:wcproduct}[See Section~\ref{subsec:wcproduct} for precise statement] Let $\calA$ be a product arrangement in $\P^{k_1} \times \cdots \times \P^{k_n}$, let $\calG$ be a building set for its intersection lattice, and let $\Sigma_{\calG}$ be the associated nested set fan.
Then there is a Chow-equivalence 
\[A^*(\overline{Y}_{\calG}) = A^*(X_{\Sigma_{\calG}}).\]
\end{thm:wcproduct} 

Equipped with this result, we specifically consider the arrangement of hyperplanes
\[\widetilde{H}_i^j = \{(p_1, \ldots, p_n) \in (\P^1)^n \; | \; p_i = \zeta^j\}\]
for each $i \in \{1, 2, \ldots, n\}$ and each $j \in \{0,1,\ldots, r-1\}$, where $\zeta$ is a fixed $r$th root of unity.  We prove in Theorem~\ref{thm:Lrnwc} that $\L^r_n$ is the wonderful compactification of this arrangement with its maximal building set.  Denoting the associated fan by $\Sigma^r_n$, we obtain by Theorem~\ref{thm:wcproduct} an explicit computation of the Chow ring $A^*(\L^r_n)$.

To describe this computation, we first recall from \cite{CDHLR} that there is a special codimension-$1$ subvariety $D_{\tI} \subseteq \L^r_n$---specifically, a boundary divisor---associated to any ``$\mathbb{Z}_r$-decorated subset of $[n]$,'' which is a pair $\tI = (I,\a)$ in which $I \subseteq \{1, 2, \ldots, n\}$ is a nonempty set and $\a$ is a function $I \rightarrow \{0,1, \ldots, r-1\}$.  There is a partial ordering on decorated subsets given by
\[(I,\a) \leq (J,\b) \; \text{ if and only if} \; I \subseteq J \text{ and } \a(i) = \b(i)  \text{ for all } i \in I.\]
With this notation, the presentation of $A^*(\L^r_n)$ is as follows.

\begin{thm:LrnChow}
The Chow ring of $\L^r_n$ is generated by the boundary divisors $D_{\tI}$ for each (nonempty) $\Z_r$-decorated subset $\tI$ of $\{1, \dots, n\}$, with relations given by
\begin{itemize}
    \item $D_{\tI} \cdot D_{\widetilde{J}} = 0$ unless either $\tI \leq \widetilde{J}$ or $\widetilde{J} \leq \tI$;
    \item for all $i \in \{1,2,\ldots, n\}$ and all $a,b \in \{0,1,\ldots, r-1\}$,
    \[\sum_{\substack{\tI \text{ s.t. }\\ i \in I, \; \a(i) = a}} D_{\tI} = \sum_{\substack{\tI \text{ s.t. }\\ i \in I, \; \a(i) = b}} D_{\tI}.\]
\end{itemize}
\end{thm:LrnChow}

We conclude the paper by giving two other interpretations of the fan $\Sigma^r_n$, which are interesting in their own right.  First, analogously to the case of $\M_{0,n}$, we show in Proposition~\ref{prop:tropical} that this fan can be identified with a moduli space $\Lrntrop$ of ``tropical $(r,n)$-curves."  And second, analogously to the way in which the permutohedron $\Pi_n$ is the normal polytope of the fan of Losev--Manin space $\L_n$, we show in  Proposition~\ref{prop:normalfan} that the polytopal complex $\Delta^r_n$ constructed in \cite{CDHLR} is a normal complex of $\Sigma^r_n$, in the sense developed by Nathanson--Ross in \cite{NR}.  This gives a more geometric interpretation of the correspondence between the boundary strata of $\L^r_n$ and the faces of $\Delta^r_n$ that was proven combinatorially in our previous work.

Leveraging the above connection to tropical geometry, we hope in future work to use tropical intersection theory on $\Lrntrop$ to study intersection numbers on $\L^r_n$ (along the lines of \cite{katz2012tropical, kerber2009intersecting, hahn2022intersecting}). We may also study the reduced rational cohomology of the locus of tropical curves with total edge length $1$ in $\Lrntrop$ to understand the mixed Hodge structure of $\calL^r_n$, in the sense of \cite{deligne71theorie, deligne74theorie} and along the lines of \cite{chan2021tropical, kannan2020topology}.  This is made possible by the observation that the boundary $\L^{r}_n \setminus \calL^r_n$ is a divisor with simple normal crossings \cite[Observation 3.6]{CDHLR}.

\begin{remark}
    Soon after this manuscript's appearance, Eur, Fink, Larson and Spink studied the type-$B$ permutohedral toric variety $X_{B_{n}}$, which is precisely $\L^{2}_n$, in relation to delta-matroids \cite{eur2022signed}. The central combinatorial construction there is the $B_n$ permutohedral fan $\Sigma_{B_n}$, which coincides with the permutohedral fan $\Sigma^{2}_n$ constructed in the present paper. Among many things, the authors give an exceptional isomorphism $\phi^{B} \colon K(X_{B_n}) \to A(X_{B_n})$ which yields a Hirzebruch--Riemann--Roch-type theorem. Their results and techniques, together with the constructions in the present paper, will be valuable hints for the potential developments for general $\L^r_n$ discussed in Remark \ref{rem:exceptional-iso}. 
\end{remark}

\subsection*{Plan of the paper}

We begin, in Section~\ref{sec:WCbackground}, by reviewing the necessary background on wonderful compactifications and proving Theorem~\ref{thm:wcproduct}; this section is entirely self-contained, so it can be read independently by a reader interested primarily in wonderful compactifications.  In Section~\ref{sec:Lrnbackground}, we recall the definition of $\L^r_n$ and we prove that it is indeed a wonderful compactification of the arrangement in $(\P^1)^n$ described above.  Section~\ref{sec:chowring} combines these results to prove the presentation of the Chow ring in Theorem~\ref{thm:LrnChow}.  Finally, Section~\ref{sec:tropical} describes the connections both to tropical $(r,n)$-curves and normal complexes.

\subsection*{Acknowledgments}

The authors are grateful to Melody Chan, Chris Eur, Daoji Huang, Diane Maclagan, and Dustin Ross for many valuable conversations and insights, and to ICERM for hosting the ``Women in Algebraic Geometry" workshop at which this collaboration began.  The first author was supported by NSF CAREER grant 2137060. The third author was supported by the Coline M. Makepeace Fellowship from Brown University and partially supported by NSF DMS grant 1844768.

\section{Wonderful compactifications}
\label{sec:WCbackground}

Wonderful compactifications were introduced by De Concini and Procesi \cite{DCP} in the context of linear subvarieties of a projective space.  Roughly speaking, given a collection of linear subvarieties in $\P^n$, a wonderful compactification is a way of replacing $\P^n$ by a different ambient variety in such a way that the complement of the linear subvarieties is preserved but the subvarieties themselves are replaced by a divisor with normal crossings.  The construction of wonderful compactifications was later generalized by Li Li \cite{LiLi} to more general collections of subvarieties in a smooth variety. In this section, we briefly review the necessary definitions for the current work, but we refer the reader to many more in-depth references---including \cite{DCP, Denham, Feichtner, FY, LiLi}---for details.  Throughout, we consider all varieties over $\C$.

\subsection{Wonderful compactifications of arrangements of subvarieties}
\label{subsec:WCbackground}

Let $Y$ be a smooth variety.  An {\bf arrangement} of subvarieties of $Y$ is a finite collection of smooth subvarieties and that pairwise intersect ``cleanly'' (see \cite[Definition 2.1]{LiLi}).  If
\[\calA = \{X_1, \ldots, X_r\}\]
is an arrangement, we denote by $\calL_{\calA}$ the intersection lattice of $\calA$; this is the poset of all intersections of subsets of $\calA$, ordered by reverse inclusion.  In particular, the unique minimal element of $\calL_{\calA}$ is $\hat{0} = Y$, which we view as the empty intersection, and the unique maximal element is $\hat{1} = \emptyset$. By the {\bf complement} of $\calA$, we mean
\[Y^{\circ}:= Y \setminus \bigcup_{i=1}^r X_i.\]

Some of the subvarieties in $\calA$ may intersect non-transversally, and the goal of a wonderful compactification of $Y^{\circ}$ is to modify the ambient variety $Y$ in such a way that the arrangement is replaced by a simple normal crossings divisor.  It is not surprising that the way to do so is to perform an iterated blow-up.  While one can obtain a wonderful compactification by blowing up at every element of $\calL_{\calA}$ (in a carefully-prescribed order explained below), some subsets of $\calA$ may already intersect transversally, so one can often obtain a compactification with similar properties by blowing up only at a subset of $\calL_{\calA}$.  The particular subsets that give rise to wonderful compactifications are known as {\bf building sets}; for the precise definition, see \cite[Definition 2.2]{LiLi}.  The most important example of a building set for the current work is the {\bf maximal building set} $\calG := \calL_{\calA} \setminus \{\widehat{0}\}$, which corresponds to blowing up every intersection of elements of $\calA$.

In general, a choice of a building set $\calG \subseteq \calL_{\calA} \setminus \{\widehat{0}\}$ gives rise to a {\bf wonderful compactification} $\overline{Y}_{\calG}$ of $Y^{\circ}$ in the following way.  First, choose an ordering of the elements of $\calG$ that is compatible with inclusion; that is, let
\[\calG = \{G_1, \ldots, G_N\}\]
in which $i \leq j$ if $G_i \subseteq G_j$.  Then, perform the following sequence of blow-ups:
\begin{itemize}
    \item blow up $Y$ along $G_1$,
    \item blow up the result along the proper transform of $G_2$,
    \item blow up the result along the proper transform of $G_3$,
\end{itemize}
and so on.  Then, as shown in \cite[Proposition 2.13]{LiLi}, the wonderful compactificaiton $\overline{Y}_{\calG}$ is the end result after blowing up along the proper transform of $G_{N}$.   

Since the blow-ups that form $\overline{Y}_{\calG}$ are only at intersections of the subvarieties $X_i$, there is an inclusion
\[Y^{\circ} \hookrightarrow \overline{Y}_{\calG},\]
and we refer to the complement $\overline{Y}_{\calG} \setminus Y^{\circ}$ as the {\bf boundary} of the wonderful compactification.  Among the ``wonderful'' properties of $\overline{Y}_{\calG}$ is the extent to which the structure of this boundary is encoded in the combinatorics of $\calG$.  In particular, the boundary is a union of divisors $D_G$ for each nonempty $G \in \calG$, and the intersection $D_{T_1} \cap \cdots \cap D_{T_r}$ is nonempty if and only if $\{T_1, \ldots, T_r\}$ forms a {\bf $\calG$-nested set}.  The definition of $\calG$-nested set is purely combinatorial and can be stated in a number of equivalent ways (see, for example, \cite[Definition 2.3]{LiLi} or \cite[Definition 3.2]{Feichtner}).  In the case where $\calG$ is the maximal building set, a $\calG$-nested set is precisely a chain in $\calL_{\calA} \setminus \{\widehat{0}\}$ as a poset.

\subsection{Wonderful compactifications of hyperplane arrangements}
\label{subsec:HAbackground}

In their original work introducing wonderful compactifications \cite{DCP}, De Concini and Procesi proved that if $\calA$ is an arrangement of hyperplanes in projective space, then the cohomology (which is isomorphic to the Chow ring, for example by \cite{keel1992intersection}) of a wonderful compactification can be read off combinatorially from the lattice $\calL_\calA$ and its building set.  Feichtner and Yuzvinsky reinterpreted this calculation in \cite{FY}, constructing a fan $\Sigma_{\calG}$ associated to any lattice $\calL$ with building set $\calG$ and proving that, in the case where $\calL$ is the intersection lattice of a hyperplane arrangement in projective space, the Chow ring of the toric variety $X_{\Sigma_{\calG}}$ coincides with De Concini--Procesi's calculation of the Chow ring of the wonderful compactification $\overline{Y}_{\calG}$ of the complement of $\calA$.  In this section, we review the parts of this story that are necessary for what follows.

Let $\calA = \{H^0, \ldots, H^{r-1}\}$ be a collection of hyperplanes in $\P^k$.  We assume in what follows that $\calA$ is {\bf essential}, meaning that
\[\bigcap_{i=0}^{r-1} H^i = \emptyset.\]
In this case, there is an inclusion
\[i:\P^k \hookrightarrow \P^{r-1}\]
under which $H^0, \ldots, H^{r-1}$ map to the coordinate hyperplanes; namely, if $H^i = \calV(f_i)$ for linear polynomials $f_i \in \C[x_0, \ldots, x_k]$, then
\[i(p) = [f_0(p): \cdots : f_{r-1}(p)].\]
It follows that $i$ maps the complement
\[Y^{\circ}= \P^k \setminus \bigcup_{i=0}^{r-1} H^i\]
of $\calA$ into the complement of the coordinate hyperplanes in $\P^{r-1}$, or in other words into the algebraic torus
\[\mathbb{T}^{r-1} = (\C^*)^{r-1}.\]
By identifying $Y^{\circ}$ with its image under $i$, then, we can view $Y^{\circ}$ as a {\bf very affine variety}---that is, a closed subvariety of a torus.

For any building set $\calG \subseteq \calL_{\calA} \setminus \{\hat{0}\}$, one defines the {\bf nested set fan} $\Sigma_{\calG}$ of $(\calL_\calA, \calG)$ as follows.  First, let
\[V_{\calA} := \R^r/\R,\]
where the quotient is by the diagonal, and denote the images of the standard basis vectors by $e^0, \ldots, e^{r-1}$.  For each $G \in \calG$, define
\[v_G := \sum_{H^j \supseteq G} e^j \in V_{\calA}.\]
Then $\Sigma_{\calG}$ is defined as the fan in $V_{\calA}$ whose cones are
\[\sigma_S:= \text{Cone}\{v_G \; | \; G \in S\} \subseteq V_{\calA}\]
for each $\calG$-nested set $S \subseteq \calG$.

Note that the toric variety $X_{\Sigma_{\calG}}$ has $\mathbb{T}^{r-1}$ as its torus, so in particular, we have
\[Y^{\circ} \subseteq \mathbb{T}^{r-1} \subseteq X_{\Sigma_{\calG}}.\]
By reinterpreting $\Sigma_{\calG}$ in terms of a stellar subdivision procedure as in \cite[Section 6]{FY} (which corresponds to regarding $X_{\Sigma_{\calG}}$ as an iterated blow-up of $\P^{r-1}$), one sees that the wonderful compactification $\overline{Y}_{\calG}$ is equal to the closure of $Y^{\circ}$ inside of $X_{\Sigma_{\calG}}$.  Moreover, by \cite[Corollary 2]{FY}, the inclusion 
\[\overline{Y}_{\calG} \hookrightarrow X_{\Sigma_{\calG}}\]
is a Chow equivalence.  This allows one to give a presentation of $A^*(\overline{Y}_{\calG})$ that can be read off directly from the combinatorics of the lattice $\calL_{\calA}$ with its building set $\calG$.

\begin{remark}
The moduli space $\M_{0,n}$ can be obtained as the wonderful compactification of the braid arrangement $\calA_{n-2}$ (the arrangement of hyperplanes $\{x_i = x_j\} \subseteq \P^{n-3}$ for all $i \neq j$), with an appropriate choice of building set \cite[Section 4.3]{DCP}.  In this case, the above results lead to an elegant presentation of the Chow ring of $\M_{0,n}$, as described in \cite[Section 4.2]{Feichtner}.  Moreover, the nested set fan can be interpreted in this context as the Bergman fan of a particular matroid, or as the moduli space of tropical curves.  These results were generalized in \cite{CHMR} to all genus-zero Hassett spaces with weight system of ``heavy/light'' type, leading to a presentation of the Chow ring of such spaces in \cite{KKL}.
\end{remark}

\subsection{Wonderful compactifications of product arrangements}
\label{subsec:wcproduct}

The case of interest in the current work is the moduli space $\L^r_n$, which, as we prove below, is a wonderful compactification of the complement of an arrangement of hyperplanes not in a single projective space but in a product of projective spaces.  Although such wonderful compactifications have been constructed via iterated blow-up (through the much more general work of Li Li described above), there is not, to our knowledge, a construction in this setting as the closure inside of a toric variety analogous to $X_{\Sigma_{\calG}}$.  We prove such a presentation in this subsection, and as a result, we obtain an identification of the Chow ring of such wonderful compactifications with the Chow ring of a toric variety that can be read off combinatorially from the intersection lattice and its building set.

Here, and in what follows, for positive integers $n$ and $r$ we use the notation
\[[n] := \{1, 2, \ldots, n\}\]
and
\[\Z_r := \{0,1,2, \ldots, r-1\}.\]
We choose these sets to index the hyperplanes in a product arrangement for consistency with the application to $\L^r_n$ that follows.

For each $i \in [n]$, fix positive integers $r_i$ and $k_i$ and an essential hyperplane arrangement \begin{equation}
\label{eq:Ai}
\mathcal{A}_i = \{H^0_i, \ldots, H^{r_i-1}_i\}.
\end{equation} inside $\P^{k_i}$. Let $Y_i^{\circ} \subseteq \P^{k_i}$ denote the complement of the arrangement $\calA_i$.  Then the product
\[Y^{\circ}:= Y_1^{\circ} \times \cdots \times Y_n^{\circ} \subseteq \P^{k_1} \times \cdots \times \P^{k_n}\]
is also the complement of a hypersurface arrangement: namely, it is the complement of
\[\mathcal{A}:= \{\widetilde{H}^j_i \; | \; i \in [n], \; j \in \mathbb{Z}_{r_i}\},\]
in which
\[\widetilde{H}_i^j:= p_i^{-1}\left(H_i^j\right)\]
is the pullback of $H_i^j \subseteq \P^{k_i}$ under the projection $p_i: \P^{k_1} \times \cdots \times \P^{k_n} \rightarrow \P^{k_i}$ to the $i$th factor.  We refer to $\calA$ as the {\bf product arrangement} induced by $\calA_1, \ldots, \calA_n$.

\begin{remark}
\label{rem:linear}
The variety $Y^{\circ}$ is very affine, since the embeddings $Y_i^{\circ} \hookrightarrow \mathbb{T}^{r_i-1}$ described in Section~\ref{subsec:HAbackground} combine to give
\begin{equation}
    \label{eq:Ycirc}
Y^{\circ} \hookrightarrow \mathbb{T}^{r_1-1} \times \cdots \times \mathbb{T}^{r_n-1} = \mathbb{T}^r,
\end{equation}
where $r := r_1 + \cdots + r_n - n$.  Moreover, $Y^{\circ}$ is linear in the sense of \cite{Gross} (that is, it is cut out by linear equations in coordinates on $\mathbb{T}^r$), because each factor $Y_i^{\circ} \hookrightarrow \mathbb{T}^{r_i-1}$ is linear.  This observation plays a key role in the proof of Theorem~\ref{thm:wcproduct} below.

In fact, for Theorem~\ref{thm:wcproduct}, it is enough to know that $Y^{\circ}$ is quasilinear in the sense of \cite{Schock}.  Schock introduced quasilinear varieties in \cite{Schock} as a generalization of linear varieties that retains the key property that, if $Y^{\circ} \hookrightarrow \mathbb{T}$ is quasilinear and $\overline{Y} \hookrightarrow X_{\Sigma}$ is a ``tropical compactification" of $Y^{\circ}$, then $\overline{Y}$ is Chow-equivalent to $X_{\Sigma}$.  Given that \cite[Theorem 6.4]{Schock} shows that products of quasilinear varieties are quasilinear, it is immediate from \eqref{eq:Ycirc} that $Y^{\circ}$ is quasilinear in our case.
\end{remark}

\begin{example}
\label{example:1}
A simple but illustrative example, which is relevant for the application to $\L^r_n$ below, is to take $n=2$ and set
\[\calA_1 = \calA_2 := \{[1:1], \; [1:-1]\} \subseteq \P^1.\]
Then the product arrangement $\calA$ consists of four hyperplanes in $\P^1 \times \P^1$:
\begin{align}
    \label{eq:calA}
\calA &= \{ \widetilde{H}_1^0, \;\widetilde{H}_1^1, \; \widetilde{H}_2^0, \; \widetilde{H}_2^1\}\\
&= \nonumber \Big\{ \{[1:1]\} \times \P^1, \; \{[1:-1]\} \times \P^1, \; \P^1 \times \{[1:1]\}, \; \P^1 \times \{[1:-1]\} \Big\} \subseteq \P^1 \times \P^1.
\end{align}
In this case, the embeddings $i_1: Y_1^{\circ} \hookrightarrow \mathbb{T}^1$ and $i_2: Y_2^{\circ} \hookrightarrow \mathbb{T}^1$ are equal and are in fact isomorphisms; indeed, they both come from the embedding (in fact, change of coordinates) $i_1 = i_2: \P^1 \rightarrow \P^1$ given by
\[[x:y] \mapsto [x-y:x+y],\]
which sends the hyperplanes in $\calA_1 = \calA_2$ to the coordinate hyperplanes in $\P^1$.  Thus, the product
\[i = i_1 \times i_2: \P^1 \times \P^1 \rightarrow \mathbb{T}^1 \times \mathbb{T}^1\]
sends $Y^{\circ}$ isomorphically to $\mathbb{T}^1 \times \mathbb{T}^1 = \mathbb{T}^2$.
\end{example}

The lattice $\calL_\calA \setminus \{\hat{1}\}$ is the product of the lattices $\calL_{\calA_i} \setminus \{\hat{1}_i\}$ with the product order, where $\hat{1}_i$ denotes the maximal element $\emptyset$ in the intersection lattice of the arrangement $\calA_i$.  From this one finds two combinatorial consequences that are important in what follows.

\begin{lemma} \label{lem:productprop}
Fix building sets $\calG_1, \ldots, \calG_n$ for the arrangements $\calA_1, \ldots, \calA_n$, respectively, and assume that $\hat{1}_i \in \calG_i$ for at least one $i$.  For each $i$, view $\calG_i$ as a subset of $\calL_{\calA}$ by identifying $X \in \calG_i$ with $p_i^{-1}(X) \in \calL_{\calA}$.  Then we have the following:
\begin{enumerate} 
    \item[(a)] The union $\bigcup_{j=1}^n\calG_j$ is a building set for $\calL_\calA$.
    \item[(b)] If $S_i \subseteq \calG_i$ for each $i$, then
    \[S_i \text{ is } \calG_i\text{-nested for each }i \; \Leftrightarrow \; \bigcup_{j=1}^n S_j \text{ is } \left(\bigcup_{j=1}^n \calG_j\right)\text{-nested}.\]
\end{enumerate}
\end{lemma}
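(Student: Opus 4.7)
The key structural observation is that every $F \in \calL_\calA$ decomposes uniquely as $F = \bigcap_{i=1}^n p_i^{-1}(F_i)$ for some $F_i \in \calL_{\calA_i} \cup \{\P^{k_i}\}$, so that there is an isomorphism of posets under reverse inclusion
\[\calL_\calA \;\cong\; \prod_{i=1}^n \bigl(\calL_{\calA_i} \cup \{\P^{k_i}\}\bigr).\]
Under this identification, the pullback embedding sends $\calG_i$ to the subset consisting of tuples that are trivial in every factor except the $i$th, and the plan is to derive both parts of the lemma from this factorization.

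For part (a), I fix $F \in \calL_\calA \setminus \{\hat{0}\}$, corresponding to the tuple $(F_1, \ldots, F_n)$. I first check that, using the essentiality of each $\calA_i$ (which forces every projection of $F$ to an individual factor to equal the corresponding $F_j$), one has $p_j^{-1}(G) \supseteq F$ if and only if $G \supseteq F_j$. Consequently, the maximal elements of $\bigcup_j \calG_j$ sitting in the interval $[\hat{0}, F] \subseteq \calL_\calA$ are precisely the union, over $j \in [n]$, of the maximal elements of $\calG_j$ in $[\hat{0}_j, F_j] \subseteq \calL_{\calA_j}$. Applying the building-set property of each $\calG_j$ to factor $[\hat{0}_j, F_j]$ and reassembling across $j$ then yields the product decomposition of $[\hat{0}, F]$ required for $\bigcup_j \calG_j$ to be a building set for $\calL_\calA$.

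For part (b), I use the characterization that $S$ is $\calG$-nested if and only if for every antichain $\{T_1, \ldots, T_k\} \subseteq S$ with $k \geq 2$, the join $T_1 \vee \cdots \vee T_k$ (i.e., the intersection as subvarieties) does not lie in $\calG$. The reverse implication is immediate, since any antichain in $S_i$ is also an antichain in $\bigcup_j S_j$ and the relevant join does not belong to $\bigcup_j \calG_j \supseteq \calG_i$. For the forward direction, I decompose a $(\geq 2)$-element antichain $A \subseteq \bigcup_j S_j$ as $A = \bigsqcup_j A_j$ with $A_j \subseteq S_j$ and split into two cases. If only one $A_j$ is nonempty, then $\bigvee A$ is a pullback from the $j$th factor; it lies outside $\calG_j$ by the nestedness of $S_j$, and outside $\calG_{j'}$ for $j' \neq j$ because a nontrivial pullback from factor $j$ cannot coincide with a nontrivial pullback from a different factor. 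If two or more $A_j$'s are nonempty, then $\bigvee A$ is a genuine product across at least two factors, hence cannot be a pullback from any single factor and so lies outside $\bigcup_j \calG_j$.

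The substantive technical point, and the main obstacle to making the argument airtight, is the equivalence $p_j^{-1}(G) \supseteq F \iff G \supseteq F_j$ used repeatedly in both parts, together with the related bookkeeping that nontrivial pullbacks from distinct factors are distinct elements of $\calL_\calA$. Both facts ultimately reduce to the essentiality of the $\calA_i$ and the product structure of $\P^{k_1} \times \cdots \times \P^{k_n}$; once they are in hand, the rest of the proof is routine verification of the combinatorial building-set and nested-set axioms factor by factor.
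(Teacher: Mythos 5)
Your proposal is correct and takes essentially the same route as the paper: part (a) via the identification of $\calL_\calA$ with the product of the $\calL_{\calA_i}$ and a factor-by-factor application of the building-set property, and part (b) via De Concini--Procesi's antichain/join characterization of nestedness with the same two-case split (all elements from one factor versus elements from at least two factors), with your extra bookkeeping about maximal elements and the distinctness of pullbacks from different factors filling in what the paper calls "straightforward to check." One small remark: the equivalence $p_j^{-1}(G) \supseteq F \iff G \supseteq F_j$ needs only the product structure and nonemptiness of the factors of $F$ (not essentiality of the $\calA_i$), so the technical point you flag as the main obstacle is in fact routine.
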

\proof \textit{(a)} By the definition of building sets (see, for example, \cite[Definition 1]{FY}), we must prove that for any $X \in \calL_{\calA}$, there is an isomorphism of posets
\begin{equation}
    \label{eq:buildingsetL}
[\hat{0}, X] \cong \prod_{Z \in \text{max}\big((\calG_1 \cup \cdots \cup \calG_n) \cap [\hat{0}, X]\big)} [\hat{0}, Z].
\end{equation}
If $X = \hat{1}$, then the condition that $\hat{1}_i \in \calG_i$ ensures that both sides of \eqref{eq:buildingsetL} are the full lattice $\calL_{\calA}$.  Suppose, then, that $X \neq \hat{1}$.  In this case under the isomorphism of $\calL_\calA \setminus \{\hat{1}\}$ with the product of the lattices $\calL_{\calA_i} \setminus \{\hat{1}\}$, we have $X= \prod_{i=1}^n X_i$ for $X_i \in \calL_{\calA_i}$.  Thus,
\[[\hat{0}, X] \cong \left[\hat{0}, \; \prod_{i=1}^n X_i\right] \cong \prod_{i=1}^n[\hat{0}, X_i] \cong \prod_{i=1}^n \prod_{Z_i \in \text{max}(\calG_i \cap [\hat{0}, X_i])} [\hat{0}, Z_i],\]
where the last isomorphism follows from the fact that each $\calG_i$ is a building set.  It is straightforward to check that this is equivalent to \eqref{eq:buildingsetL}.

\textit{(b)} We denote
\[S:= \bigcup_{j=1}^n S_j,\]
and we use the characterization of nested sets given in \cite[Section 2.4, Lemma (1)]{DCP}: a subset $T$ of a building set $\calH$ is $\calH$-nested if, given pairwise incomparable elements $X_1, \ldots, X_t \in T$ in which $t \geq 2$, the join $X_1 \vee \cdots \vee X_t$ is not in $\calH$.

Suppose that each $S_i$ is $\calG_i$-nested.  To see that $S$ is $\left(\bigcup_{j=1}^n\calG_j\right)$-nested, let $X_1, \ldots, X_t \in S$ be pairwise incomparable elements with $t \geq 2$.  (If no such elements exist, then $S$ is automatically nested.)  If at least two of these elements belong to different factors $S_i$, then their join is not in $\bigcup_{j=1}^n \calG_j$, so we are done.  Thus, all that remains is the possibility that $X_1, \ldots, X_t \in S_i$ for some $i$, in which case the fact that $S_i$ is $\calG_i$-nested implies that
\[X_1 \vee \cdots \vee X_t \notin \calG_i\]
and hence this join is not in $\bigcup_{j=1}^n \calG_j$.

Conversely, suppose that $S$ is $\left(\bigcup_{j=1}^n\calG_j\right)$-nested.  To see that $S_i$ is $\calG_i$-nested for each $i$, let $X_1, \ldots, X_t \in S_i$ be pairwise incomparable elements with $t \geq 2$.  Since $S$ is $\left(\bigcup_{j=1}^n\calG_j\right)$-nested, we have
\[X_1 \vee \cdots \vee X_n \notin \bigcup_{j=1}^n \calG_j,\]
so in particular, this join is not in $\calG_i$.  
\endproof

We are now prepared to define ``nested set fans'' in the product setting by direct analogy to the situation described in Section~\ref{subsec:HAbackground}. 
 
\begin{definition}
\label{def:nestedsetfan}
Let $\calA_1, \ldots, \calA_n$ be hyperplane arrangements as in \eqref{eq:Ai}, let $\calA$ be the induced product arrangement, and let $V_{\calA}$ be the vector space
\[V_\calA := \R^{r_1}/\R \times \cdots \times \R^{r_n}/\R,\]
where each quotient is by the diagonal and we denote the images of the standard basis vectors in the $i$th factor by $e_i^0, \ldots, e_i^{r_i-1}$.  For any $G \in \calL_{\calA} \setminus \{\hat{0}\}$, define
\[v_G := \sum_{\widetilde{H}_i^j \supseteq G} e_i^j \in V_{\calA}.\]
Then, given any building set $\calG \subseteq \calL_{\calA} \setminus \{\hat{0}\}$, the {\bf nested set fan} for $(\calL_\calA, \calG)$ is the fan $\Sigma_{\calG}$ in $V_{\calA}$ whose cones are
\begin{equation}
    \label{eq:sigmaS}
\sigma_S := \text{Cone}\{ v_G \; | \; G \in S\} \subseteq V_{\calA}
\end{equation}
for each $\calG$-nested set $S \subseteq \calG$.
\end{definition}

\begin{example}
\label{example:2}
In the case of Example~\ref{example:1}, one has $n=2$ and $r_1 = r_2 = 2$, so
\[V_{\calA} = \R^2/\R \times \R^2/\R \cong \R^2.\]
Let $\calG$ be the maximal building set, so that $\calG$-nested sets are precisely chains in $\calL_\calA \setminus \{\hat{0}\}$ as a poset---in other words, nested collections of intersections of the sets $\widetilde{H}^i_j$ listed in equation~\eqref{eq:calA}.  The nested set fan $\Sigma_{\calG}$ in this example is depicted in Figure~\ref{fig:Sigmarn}.  In particular, the shaded cone is
\[\text{Cone}(e_2^0, \;e_1^1 + e_2^0),\]
which is the cone $\sigma_S$ for the $\calG$-nested set $S = \{\widetilde{H}_2^0, \; \widetilde{H}_1^1 \cap \widetilde{H}_2^0\}$.
\end{example}

The only difference between Definition~\ref{def:nestedsetfan} and Feichtner--Yuzvinksy's nested set fan described in Section~\ref{subsec:HAbackground} is the quotients by $\R$ in $V_{\calA}$ corresponding to each projective space factor.  The point, however, is that these quotients do not affect the key step in Feichtner--Yuzvinsky's argument that $X_{\Sigma_{\calG}}$ is Chow-equivalent to the wonderful compactification $\overline{Y}_{\calG}$, which is a re-expression of $\Sigma_{\calG}$ in terms of a stellar subdivision procedure; see \cite[Theorem 4]{FY} and Lemma~\ref{lem:stellar} below.

In particular, we have the following analogue for product arrangements of the known results for hyperplane arrangements in projective space.

\begin{theorem}
\label{thm:wcproduct}
Let $\calA_1, \ldots, \calA_n$ be essential hyperplane arrangements in respective projective spaces $\P^{k_1}, \ldots, \P^{k_n}$, let $\calA$ be the induced product arrangement in $\P^{k_1} \times \cdots \times \P^{k_n}$, and let $Y^{\circ} \subseteq \P^{k_1} \times \cdots \P^{k_n}$ be the complement of $\calA$.  Let $\calG$ be any building set for $\calL_\calA$, and let $\Sigma_{\calG}$ be the nested set fan for $(\calL_\calA, \calG)$. 
Then there is an embedding
\[Y^{\circ} \hookrightarrow X_{\Sigma_{\calG}}\]
such that the wonderful compactification $\overline{Y}_{\calG}$ is the closure of $Y^{\circ}$ in $X_{\Sigma_{\calG}}$.  Moreover, the inclusion of $\overline{Y}_{\calG}$ into $X_{\Sigma_{\calG}}$ is a Chow equivalence:
\[A^*(\overline{Y}_{\calG}) = A^*(X_{\Sigma_{\calG}}).\]
\end{theorem}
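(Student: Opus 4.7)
The plan is to realize $Y^\circ$ as a subvariety of $X_{\Sigma_{\calG}}$ via its very affine structure, identify its closure with $\overline{Y}_{\calG}$ using a stellar subdivision argument parallel to Feichtner--Yuzvinsky's, and then invoke Schock's theorem to deduce the Chow equivalence.

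First, the embedding $Y^{\circ} \hookrightarrow X_{\Sigma_{\calG}}$ is immediate: combining the hyperplane-arrangement embeddings $i_a : Y_a^{\circ} \hookrightarrow \mathbb{T}^{r_a - 1}$ recalled in Section~\ref{subsec:HAbackground} yields \eqref{eq:Ycirc}, and the target $\mathbb{T}^r$ is precisely the dense torus of $X_{\Sigma_{\calG}}$ in light of Definition~\ref{def:nestedsetfan}. Let $\overline{Y}^{\text{cl}}$ denote the closure of $Y^\circ$ in $X_{\Sigma_{\calG}}$.

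To identify $\overline{Y}^{\text{cl}}$ with $\overline{Y}_{\calG}$, I would first treat the base case in which the building set is the ``atomic'' one, namely $\calG_0 := \bigcup_i \{\widetilde H_i^j : j \in \mathbb{Z}_{r_i}\}$, which is a building set by Lemma~\ref{lem:productprop}(a). For an atom $\widetilde H_i^j$ one has $v_{\widetilde H_i^j} = e_i^j$, so $\Sigma_{\calG_0}$ is the product of the fans of the $\P^{r_a - 1}$'s, and $X_{\Sigma_{\calG_0}} = \P^{r_1 - 1} \times \cdots \times \P^{r_n - 1}$. The closure of $Y^\circ$ inside this toric variety is visibly $\P^{k_1} \times \cdots \times \P^{k_n}$, obtained as the product of the closures of each $i_a(Y_a^\circ)$, so the claim holds for $\calG_0$ with $\overline{Y}_{\calG_0} = Y$. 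For a general $\calG \supseteq \calG_0$, I would argue (using Lemma~\ref{lem:stellar} below, the analogue of \cite[Theorem 4]{FY}) that $\Sigma_{\calG}$ arises from $\Sigma_{\calG_0}$ by a sequence of stellar subdivisions at the rays $\R_{\geq 0} v_G$ for $G \in \calG \setminus \calG_0$, taken in an order compatible with reverse inclusion; the passage to the quotients by $\R$ in each factor of $V_{\calA}$ does not affect this combinatorial argument, since Lemma~\ref{lem:productprop}(b) guarantees that nestedness splits across factors. Under the dictionary between stellar subdivisions of the fan and blow-ups of the toric variety at torus-invariant subvarieties, each such subdivision restricts on the proper transform of $Y$ to the blow-up along the proper transform of $G$, using cleanness of the intersections in $\calA$. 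Comparing to the iterated blow-up description of $\overline{Y}_{\calG}$ from Section~\ref{subsec:WCbackground} gives $\overline{Y}^{\text{cl}} = \overline{Y}_{\calG}$.

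For the Chow equivalence, I would appeal to the theory of quasilinear varieties \cite{Schock}. By Remark~\ref{rem:linear}, $Y^\circ$ is quasilinear: each factor $Y_a^\circ \hookrightarrow \mathbb{T}^{r_a - 1}$ is linear (hence quasilinear), and products of quasilinear varieties are quasilinear by \cite[Theorem 6.4]{Schock}. It remains to check that $\overline{Y}_{\calG} \hookrightarrow X_{\Sigma_{\calG}}$ is a tropical compactification in the sense required by Schock, after which the theorem gives $A^*(\overline{Y}_{\calG}) = A^*(X_{\Sigma_{\calG}})$. Properness of $\overline{Y}_{\calG}$ is immediate (it is obtained from the proper $Y$ by blow-ups), and the transversality condition follows from the stellar-subdivision picture together with the fact that $\overline{Y}_{\calG}$ meets the torus orbits of $X_{\Sigma_{\calG}}$ cleanly in the boundary divisors $D_G$.

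I expect the main obstacle to be the verification that each stellar subdivision of the ambient toric fan restricts on the proper transform of $Y$ exactly to the blow-up of the proper transform of the corresponding $G \in \calG$, and nothing larger. This requires a careful identification of the torus-invariant subvariety of $X_{\Sigma_{\calG}}$ associated with the ray $\R_{\geq 0} v_G$ and its intersection with $\overline{Y}^{\text{cl}}$; the product structure means that $v_G$ may combine rays $e_i^j$ from several factors, so one must track how the various projections interact. Once this is in place, the rest of the argument is a direct transplant of Feichtner--Yuzvinsky's reasoning to the product setting, modulo the invocation of Schock's theorem for the final Chow-equivalence step.
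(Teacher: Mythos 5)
Your overall strategy is the right one and parallels the paper's (embed $Y^\circ$ into the dense torus of $X_{\Sigma_\calG}$, compare blow-ups with stellar subdivisions in the style of Feichtner--Yuzvinsky, then use (quasi)linearity to get the Chow equivalence), but two steps have genuine gaps. First, your base case is ill-founded: the ``atomic'' collection $\calG_0$ consisting of the hyperplanes $\widetilde H_i^j$ is not in general a building set, so Lemma~\ref{lem:productprop}(a) does not apply and $\Sigma_{\calG_0}$, $\overline{Y}_{\calG_0}$ are not defined (already for a single factor, if three hyperplanes of $\calA_i$ contain a common codimension-two flat, the interval below that flat is not Boolean, so the atoms alone fail the building-set condition). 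More seriously, the claim that $\Sigma_\calG$ arises from the product of the complete fans of the $\P^{r_a-1}$ by stellar subdivisions alone is false: stellar subdivision preserves the support of a fan, hence completeness, while $\Sigma_\calG$ is typically not complete (e.g.\ for $r\geq 3$ points in $\P^1$ with maximal building set, $\Sigma_r$ is just $r$ rays in $\R^r/\R$). As in \cite[Theorem 4]{FY} and in the paper's use of Lemma~\ref{lem:stellar}, there is a second step: after the subdivisions one deletes the open cones $\sigma_S$ with $S$ not nested, so $X_{\Sigma_\calG}$ is only an open toric subvariety of the iterated blow-up of $\P^{r_1-1}\times\cdots\times\P^{r_n-1}$. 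The substantive point you never address is that the closure of $Y^\circ$ avoids the deleted strata; the paper proves this by combining Li Li's description of $\overline Y_\calG$ as the iterated blow-up of $\P^{k_1}\times\cdots\times\P^{k_n}$ along all of $\calG$ \cite[Definition 2.12]{LiLi}, the blow-up closure lemma, and the fact that the boundary strata of $\overline Y_\calG$ are indexed by $\calG$-nested sets \cite[Section 3.2]{DCP}. That same package also supplies the step you yourself flag as ``the main obstacle'' (that each subdivision restricts on the closure to the blow-up of the proper transform of the corresponding $G$ and nothing larger), which your proposal leaves unproven.

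Second, the Chow-equivalence step is asserted rather than proved. Replacing Gross's theorem for linear varieties by Schock's for quasilinear ones is fine (Remark~\ref{rem:linear} notes exactly this), but either result requires that $\overline Y_\calG\hookrightarrow X_{\Sigma_\calG}$ be a tropical compactification, i.e.\ $|\Sigma_\calG|=\text{Trop}(Y^\circ)$ and the multiplication map $\mathbb{T}^r\times\overline Y_\calG\to X_{\Sigma_\calG}$ is faithfully flat; properness of $\overline Y_\calG$ and an appeal to ``cleanness along the $D_G$'' do not establish either condition. The paper's argument is that each factor $\overline Y_{\calG_i}\subseteq X_{\Sigma_{\calG_i}}$ is a tropical compactification and that Lemma~\ref{lem:stellar} yields a proper toric morphism from $\Sigma_\calG$ to $\Sigma_{\calG_1}\times\cdots\times\Sigma_{\calG_n}$, so \cite[Proposition 2.5]{Tevelev} applies; some argument of this kind is needed before Schock's (or Gross's) theorem can be invoked.
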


In order to prove this theorem, we first observe that a building set $\calG$ for $\calL_\calA$ induces building sets $\calG_1, \ldots, \calG_n$ for $\calL_{\calA_1}, \ldots, \calL_{\calA_n}$, respectively:
\begin{equation*}
\calG_i := \left\{X \in \calL_{\calA_i} \; \left| \; p_i^{-1}(X) \in \calG\right.\right\}.
\end{equation*}
Thus, one can define a nested set fan $\Sigma_{\calG_i}$ for each $i$, which is a fan in $\R^{r_i}/\R$.  While $\Sigma_\calG$ is not equal to the product $\Sigma_{\calG_1} \times \cdots \times \Sigma_{\calG_n}$, it is equal to a stellar subdivision of that product, as the following lemma verifies.  

 \begin{lemma}
 \label{lem:stellar}
 Let $\calA$ be a product arrangement induced by arrangements $\calA_1, \ldots, \calA_n$, let $\calG$ be a building set for $\calL_\calA$, and let $\calG_1, \ldots, \calG_n$ be the induced building sets for $\calL_{\calA_1}, \ldots, \calL_{\calA_n}$.  Viewing each $\calG_i$ as a subset of $\calG$ by identifying $X \in \calG_i$ with $p_i^{-1}(X) \in \calG$, write
\[\calG \setminus \bigcup_{i=1}^n \calG_i = \{C_1, \ldots, C_N\},\]
 where the elements are ordered in such a way that $i \leq j$ whenever $C_i \subseteq C_j$.  Then $\Sigma_{\calG}$ is obtained from $\Sigma_{\calG_1} \times \cdots \times \Sigma_{\calG_n}$ by stellar subdivision at the vector $v_{C_1}$, then the vector $v_{C_2}$, and so on.
 \end{lemma}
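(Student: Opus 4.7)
The plan is to prove the lemma in three stages: (i) identify the product fan $\Sigma_{\calG_1} \times \cdots \times \Sigma_{\calG_n}$ with the nested set fan $\Sigma_{\bigcup_i \calG_i}$ in $V_\calA$; (ii) verify that each intermediate collection $\calG^{(k)} := \bigl(\bigcup_i \calG_i\bigr) \cup \{C_1, \dots, C_k\}$ is a building set for $\calL_\calA$; and (iii) apply Feichtner--Yuzvinsky's stellar subdivision theorem \cite[Theorem~4]{FY} to pass from $\Sigma_{\calG^{(k-1)}}$ to $\Sigma_{\calG^{(k)}}$ at each step.

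For (i), I would first observe that for $G \in \calG_i$, identified with $p_i^{-1}(G) \in \calG$, the containment $\widetilde{H}_j^\ell \supseteq p_i^{-1}(G)$ forces $j = i$ (else $p_j^{-1}(H_j^\ell)$ would contain points of arbitrary $j$th coordinate). Hence $v_G = \sum_{H_i^\ell \supseteq G} e_i^\ell$ lies in the $i$th summand $\R^{r_i}/\R$ of $V_\calA$ and agrees with its image under the natural inclusion $\Sigma_{\calG_i} \hookrightarrow V_\calA$. Combining this with Lemma~\ref{lem:productprop}(b) then yields a cone-by-cone identification: each product cone $\sigma_{S_1} \times \cdots \times \sigma_{S_n}$ equals $\sigma_{S_1 \sqcup \cdots \sqcup S_n}$, so the two fans coincide.

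Stage (ii) is a short combinatorial check. The base case $\calG^{(0)}$ is Lemma~\ref{lem:productprop}(a). For the inductive step, the ordering hypothesis makes $C_k$ minimal in $\calG \setminus \calG^{(k-1)}$, and the building set factorization property for $\calG^{(k)}$ at an arbitrary $X \in \calL_\calA$ can be verified by a case analysis on whether $C_k \in [\hat 0, X]$: in the negative case $\calG^{(k)} \cap [\hat 0, X] = \calG^{(k-1)} \cap [\hat 0, X]$, while in the positive case the factorization can be read off from the building-set factorization for $\calG$ combined with the minimality of $C_k$ in $\calG \setminus \calG^{(k-1)}$.

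For stage (iii), Feichtner--Yuzvinsky's theorem states that if $\calG' \supsetneq \calG''$ are two building sets differing by a single element $C$ that is minimal in $\calG' \setminus \calG''$, then $\Sigma_{\calG'}$ is obtained from $\Sigma_{\calG''}$ by stellar subdivision at $v_C$. I expect the main obstacle to be checking that their argument transfers intact to our setting, where the fans live in the product quotient $V_\calA = \prod_i \R^{r_i}/\R$ rather than the single quotient $\R^r/\R$. Their proof is purely combinatorial once one has the crucial decomposition
\[
v_C = \sum_{Z \in S^*} v_Z, \qquad S^* := \max\!\bigl(\calG'' \cap [\hat 0, C]\bigr),
\]
which identifies the minimal cone of $\Sigma_{\calG''}$ containing $v_C$ as $\sigma_{S^*}$ and shows that $v_C$ lies in its relative interior. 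This identity follows in any ambient vector space from the building-set factorization $[\hat 0, C] \cong \prod_{Z \in S^*} [\hat 0, Z]$, which partitions the atoms $\{\widetilde{H}_i^j \supseteq C\}$ according to the unique $Z \in S^*$ with $\widetilde{H}_i^j \supseteq Z$, and so it holds in $V_\calA$ for exactly the same reason it holds in $\R^r/\R$.
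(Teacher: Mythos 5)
Your proposal is correct and its skeleton is the same as the paper's: stage (i) is exactly the paper's first step, which identifies $\Sigma_{\calG_1} \times \cdots \times \Sigma_{\calG_n}$ with the nested set fan $\Sigma_{\calG_1 \cup \cdots \cup \calG_n}$ using both parts of Lemma~\ref{lem:productprop} (your extra check that each $v_G$ for $G \in \calG_i$ lands in the $i$th summand of $V_\calA$ is a welcome detail the paper leaves implicit). Where you diverge is in the second half: the paper disposes of it in one line by citing \cite[Theorem 4.2]{FM}, which states precisely that an inclusion of building sets $\bigl(\bigcup_{i}\calG_i\bigr) \subseteq \calG$ yields the claimed chain of stellar subdivisions in the stated order, whereas you propose to re-derive that result by interpolating building sets $\calG^{(k)}$ one element at a time and transferring the one-step subdivision argument to the ambient space $\prod_i \R^{r_i}/\R$ via the identity $v_{C} = \sum_{Z \in \max(\calG^{(k-1)} \cap [\hat 0, C])} v_Z$. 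That is essentially a reconstruction of the proof of the cited result, and your key observation---that the identity follows from the building-set factorization of $[\hat 0, C]$ by partitioning the atoms, hence is insensitive to which quotient of $\R^{r_1+\cdots+r_n}$ one works in (essentialness guaranteeing $v_G \neq 0$)---is the correct reason the citation applies verbatim in the product setting. One attribution slip: the one-step statement you quote is not \cite[Theorem 4]{FY} (which, as used in this paper, is the two-step passage from the fan of projective space to $\Sigma_{\calG}$); the off-the-shelf reference for your stages (ii)--(iii), including the verification that the intermediate families are building sets under the given ordering, is the paper's \cite[Theorem 4.2]{FM}. So your route buys self-containedness at the cost of redoing a known combinatorial lemma, while the paper's buys brevity by citation; mathematically the two coincide.
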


\begin{proof}
It suffices to assume that $\hat{1} \in \calG$ (and therefore $\hat{1}_i \in \calG_i$ for each $i$), because if $G = \hat{1}$ then $v_G = 0 \in V_{\calA}$, so including $\hat{1}$ in $\calG$ does not affect the nested set fan.  Thus, in view of Lemma \ref{lem:productprop}(a), we see that $\bigcup_{i=1}^n\calG_i$ is a building set for $\calL_\calA$.  It therefore induces a nested set fan, and we claim that
\begin{equation}
    \label{eq:productnestedsetfan}
\Sigma_{\calG_1} \times \cdots \times \Sigma_{\calG_n} = \Sigma_{\calG_1 \cup \cdots \cup \calG_n}.
\end{equation}
Indeed, the cones of $\Sigma_{\calG_1 \cup \cdots \cup \calG_n}$ are, by definition, of the form $\sigma_S$ for each $(\bigcup_{i=1}^n \calG_i)$-nested set $S$.  By Lemma \ref{lem:productprop}(b), these are precisely the cones
\[\sigma_{S_1 \cup \cdots \cup S_n} = \sigma_{S_1} \times \cdots \times \sigma_{S_n}\]
in which $S_i \subseteq \calG_i$ is $\calG_i$-nested for each $i$, which are the cones of $\Sigma_{\calG_1} \times \cdots \times \Sigma_{\calG_n}$.

On the other hand, by \cite[Theorem 4.2]{FM}, the inclusion of building sets $\left(\bigcup_{i=1}^n\calG_i\right) \subseteq \calG$ implies that $\Sigma_{\calG}$ is obtained from $\Sigma_{\calG_1 \cup \cdots \cup \calG_n}$ by the sequence of stellar subdivision as claimed.  Thus, by \eqref{eq:productnestedsetfan}, the proof is complete.
\end{proof}

\begin{example}
As an illustration of Lemma~\ref{lem:stellar}, let $\calA$ again be the product arrangement of Examples~\ref{example:1} and \ref{example:2}, and let $\calG$ be its maximal building set.  Explicitly, $\calG$ consists of the four hyperplanes $\widetilde{H}_i^j$ listed in \eqref{eq:calA} as well as the intersections $\widetilde{H}_1^j \cap \widetilde{H}_2^k$ for all $j,k \in \{0,1\}$, whereas
\begin{align*}
\calG_1 &= \{H_1^0, \; H_1^1\} = \{[1:1], [1:-1]\},\\
\calG_2 &= \{H_2^0, \; H_2^1\} = \{[1:1], [1:-1]\}.
\end{align*}
One has
\[V_{\calA_1} = V_{\calA_2} = \R^2/\R \cong \R,\]
and $\Sigma_{\calG_1} = \Sigma_{\calG_2}$ is the fan in this vector space consisting of two rays pointing in opposite directions together with the origin. Explicitly, the positive-dimensional cones in $\Sigma_{\calG_1}$ are 
\[
\Big\{\text{Cone}(e_1^0), \; \text{Cone}(e_1^1)\Big\},
\]
and the positive-dimensional cones in $\Sigma_{\calG_2}$ are 
\[
\Big\{\text{Cone}(e_2^0), \; \text{Cone}(e_2^1)\Big\},
\]
from which one sees that the product $\Sigma_{\calG_1} \times \Sigma_{\calG_2}$ has four two-dimensional cones
\[\text{Cone}(e_1^0, e_2^0), \; \text{Cone}(e_1^0, e_2^1), \; \text{Cone}(e_1^1, e_2^0), \; \text{Cone}(e_1^1, e_2^1).\]
The fan $\Sigma_{\calG}$, which we considered in Example~\ref{example:2}, is obtained from this product by stellar subdivision along the four vectors $e_1^j + e_2^k$ corresponding to the four elements $\widetilde{H}_1^j \cap \widetilde{H}_2^k$ of $\calG \setminus (\calG_1 \cup \calG_2)$.  See Figure~\ref{fig:stellar} for an illustration, though note that the fan $\Sigma_{\calG_1} = \Sigma_{\calG_2}$ is denoted by $\Sigma_2$ in that figure, and the fan $\Sigma_{\calG}$ is denoted by $\Sigma_2^2$, for consistency with the general notation for $\L^r_n$ established below.
\end{example}

The key upshot of Lemma~\ref{lem:stellar} is the following.  By \cite[Theorem 4]{FY}, each of the fans $\Sigma_{\calG_i}$ can be obtained from the fan for $\P^{r_i-1}$ by a two-step process: first, one performs successive stellar subdivision along the vectors $v_Z$ for $Z \in\calG_i$, which produces a fan in which all cones have the form $\sigma_S$ for $S \subseteq \calG_i$, and second, one removes the open cones $\sigma_S$ for which $S$ is not $\calG_i$-nested.  Thus, Lemma~\ref{lem:stellar} says that $\Sigma_{\calG}$ can similarly be obtained from the fan for $\P^{r_1-1} \times \cdots \times \P^{r_n-1}$ by first performing successive stellar subdivisions along the vectors $v_G$ for all $G \in \calG$, and then removing the open cones $\sigma_S$ for which $S$ is not $\left(\bigcup_{i=1}^n \calG_i\right)$-nested.  

Equipped with these observations, we are ready for the proof of Theorem~\ref{thm:wcproduct}.

\begin{proof}[Proof of Theorem~\ref{thm:wcproduct}]
The fact that there is an embedding $Y^{\circ} \hookrightarrow X_{\Sigma_{\calG}}$ is immediate: by Remark~\ref{rem:linear}, we have an embedding of $Y^{\circ}$ into the torus $\mathbb{T}^{r_1-1} \times \cdots \times \mathbb{T}^{r_n-1}$, which is the torus for the toric variety $\Sigma_{\calG}$.

To see that the closure of $Y^{\circ}$ in $X_{\Sigma_{\calG}}$ is indeed $\overline{Y}_{\calG}$, write
\[\calG = \{W_1, \ldots, W_M\},\]
again ordered in such a way that $i \leq j$ whenever $W_i \subseteq W_j$.  Then Li Li's construction of wonderful compactifications in \cite[Definition 2.12]{LiLi} shows that $\overline{Y}_{\calG}$ is an iterated blow-up of $\P^{k_1} \times \cdots \times \P^{k_n}$ along $W_1, \ldots, W_M$.  Now, let
\[i: \P^{k_1} \times \cdots \times \P^{k_n} \hookrightarrow \P^{r_1-1} \times \cdots \times \P^{r_n-1}\]
be the product of the embeddings described in Section~\ref{subsec:HAbackground}, under which the elements of $\calA$ are mapped to torus-invariant strata.  In particular, let $Z_1, \ldots, Z_M$ be torus-invariant strata such that $i^{-1}(Z_j) = W_j$ for each $j$.  Then, by the blow-up closure lemma (see \cite[Lemma 22.2.6]{VakilBook}), one can view $\overline{Y}_{\calG}$ as the closure of the image of
\[\P^{k_1} \times \cdots \times \P^{k_n} \setminus \bigcup_{i=1}^M W_j\]
in the iterated blow-up of $\P^{r_1-1} \times \cdots \times \P^{r_n-1}$ along $Z_1, \ldots, Z_M$.  This is the same as the closure of the image of $Y^{\circ}$ in this iterated blow-up, since replacing the above complement by $Y^{\circ}$ only adds points that avoid $Z_1, \ldots, Z_M$.

The iterated blow-up of $\P^{r_1-1} \times \cdots \times \P^{r_n-1}$ along $Z_1, \ldots, Z_M$ is a toric variety whose fan has cones of the form $\sigma_S$ for $S \subseteq \calG$, and, by the discussion immediately following the proof of Lemma~\ref{lem:stellar} above, one can obtain $X_{\Sigma_{\calG}}$ from this toric variety by removing all of the open strata corresponding to cones $\sigma_S$ in which $S$ is not $\left(\bigcup_{i=1}^n \calG_i\right)$-nested.  Since $\bigcup_{i=1}^n \calG_i \subseteq \calG$, such sets are also not $\calG$-nested.  It follows that removing these cones does not affect the closure of $Y^{\circ}$, because the fact that the boundary strata of $\overline{Y}_{\calG}$ are indexed by $\calG$-nested sets (see \cite[Section 3.2]{DCP}) means that it avoids the blow-ups corresponding to non-nested sets.  Thus, $\overline{Y}_{\calG}$ is indeed the closure of $Y^{\circ}$ in $X_{\Sigma_{\calG}}$.

Finally, to see that the inclusion $\overline{Y}_{\calG} \hookrightarrow X_{\Sigma_{\calG}}$ is a Chow equivalence, we recall from Remark~\ref{rem:linear} that $Y^{\circ} \subseteq \mathbb{T}^r$ is a linear variety, which implies by \cite[Theorem 1.1]{Gross} that such a Chow equivalence holds so long as $\overline{Y}_{\calG} \subseteq X_{\Sigma_{\calG}}$ is a tropical compactification, meaning that $|\Sigma_{\calG}| = \text{Trop}(Y^{\circ})$ and the multiplication map $\mathbb{T}^r \times \overline{Y}_{\calG} \rightarrow X_{\Sigma_{\calG}}$ is faithfully flat.  This is indeed the case: each $\overline{Y}_{\calG_i}$ is a tropical compactification and, by Lemma~\ref{lem:stellar}, there is a proper toric morphism
 \[\Sigma_{\calG} \rightarrow \Sigma_{\calG_1} \times \cdots \times \Sigma_{\calG_n},\]
so the fact that $\overline{Y}_{\calG}$ is a tropical compactification follows from \cite[Proposition 2.5]{Tevelev}.
\end{proof}

\section{The moduli space of curves with cyclic action}
\label{sec:Lrnbackground}

In this section, we review the definition and necessary properties of the moduli space $\L^r_n$ introduced in \cite{CDHLR}, and we prove that it is a wonderful compactification of a product arrangement in $(\P^1)^n$.  Throughout, we assume that $r \geq 2$.

\subsection{Background on \texorpdfstring{$\Lrn$}{the moduli space}}

The objects parameterized by $\L^r_n$ are {\bf stable $(r,n)$-curves}.  The underlying curve $C$ in such an object is an ``$r$-pinwheel curve,'' which is a rational curve consisting of a central projective line from which $r$ equal-length chains of projective lines (``spokes'') emanate.  This curve is equipped with an order-$r$ automorphism $\sigma$, as well as marked points as follows:
\begin{itemize}
    \item two distinct fixed points $x^+$ and $x^-$ of $\sigma$;
    \item $n$ labeled $r$-tuples $(z_1^{0}, \ldots, z_1^{r-1}), \ldots, (z_n^{0}, \ldots, z_n^{r-1})$ of points $z_i^j \in C$ satisfying
    \[\sigma(z_i^j) = z_i^{j+1 \!\!\!\!\mod r}\]
    for each $i$ and $j$, where we allow $z_i^j = z_{i'}^{j'}$ and $z_i^j = x^{\pm}$;
    \item an additional labeled $r$-tuple $(y^{0}, \ldots, y^{r-1})$ satisfying
    \[\sigma(y^\ell) = y^{\ell+1 \!\!\!\!\mod r}\]
    for each $\ell$, whose elements are distinct from one another as well as from $x^{\pm}$ and $z_i^j$.
\end{itemize}
These marked points are subject to a stability condition, the details of which can be found in \cite[Section 2.1]{CDHLR}.  We refer to each tuple $(z_i^0, \ldots, z_i^{r-1})$ as a ``light orbit'' of $\sigma$ and the tuple $(y^0, \ldots, y^{r-1})$ as the ``heavy orbit.''  See Figure~\ref{fig:samplecurve} for an example of a stable $(r,n)$-curve.  

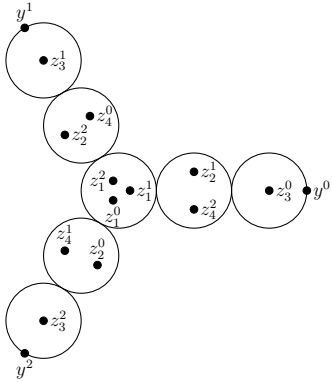
\begin{figure}[ht]
\begin{center}
    \begin{tikzpicture}[scale=1,  every node/.style={scale=0.6}]
    \filldraw (0.15,0) circle (0.05cm) node[right]{$z^1_1$};
    \filldraw (-0.075,0.1299) circle (0.05cm) node[left]{$z^2_1$};
    \filldraw (-0.075,-0.1299) circle (0.05cm) node[below]{$z^0_1$};
    \filldraw(1,0.25) circle (0.05cm) node[right]{$z^1_2$};
    \filldraw(-0.7165, 0.741) circle (0.05cm) node[right]{$z^2_2$};
    \filldraw(-0.283, -0.991) circle (0.05cm) node[above]{$z^0_2$};
    \filldraw(1,-0.25) circle (0.05cm) node[right]{$z^2_4$};
    \filldraw(-0.383,0.991) circle (0.05cm) node[right]{$z^0_4$};
    \filldraw(-0.717,-0.8) circle (0.05cm) node[above]{$z^1_4$};
    \filldraw (2,0) circle (0.05cm) node[right]{$z^0_3$};
    \filldraw (-1,1.732) circle (0.05cm) node[right]{$z^1_3$};
    \filldraw (-1,-1.732) circle (0.05cm) node[right]{$z^2_3$};
    \draw (0,0) circle (0.5cm);
    \draw (1,0) circle (0.5cm);
    \draw (2,0) circle (0.5cm);
    \filldraw (2.5,0) circle (0.05cm) node[right]{$y^0$};
    \draw (-0.5,0.866) circle (0.5cm);
    \draw (-1,1.732) circle (0.5cm);
    \filldraw (-1.25,2.165) circle (0.05cm) node[above]{$y^1$};
    \draw (-0.5,-0.866) circle (0.5cm);
    \draw (-1,-1.732) circle (0.5cm);
    \filldraw (-1.25,-2.165) circle (0.05cm) node[below]{$y^2$};
        \end{tikzpicture}
\end{center}
\caption{A stable $(3,4)$-curve, where each circle represents a $\P^1$ component and $\sigma$ is the rotational automorphism.  Not pictured are the marked points $x^+$ and $x^-$, which are the two fixed points of $\sigma$ and must both lie on the central component.}
\label{fig:samplecurve}
\end{figure}

In \cite[Theorem 3.5]{CDHLR}, a fine moduli space $\L^r_n$ for stable $(r,n)$-curves is constructed, whose $B$-points correspond to families of stable $(r,n)$-curves over the base scheme $B$ as defined in \cite[Definition 2.5]{CDHLR}.  More precisely, there is a connected component $\L^r_n(\zeta)$ for any choice of primitive $r$th root of unity $\zeta$, all of which are isomorphic to one another, and the moduli space $\L^r_n$ is the disjoint union of these connected components.  In what follows, we will assume that $\zeta$ is fixed and we will therefore abuse notation by referring to the space $\L^r_n$ when we in fact mean a single component $\L^r_n(\zeta)$.

\subsection{An alternative description of the moduli space}
\label{sec:newhassett}

The construction of $\L^r_n$ in \cite{CDHLR} is as a subvariety of a ``Hassett space''---that is, a moduli space of stable rational curves with weighted marked points.  Roughly speaking, for any weight vector $\vec{w} = (w_1, \ldots, w_n) \in (\Q \cap (0,1])^n$ such that $\sum w_i > 2$, the associated genus-zero Hassett space $\M_{0,\vec{w}}$ is a moduli space of rational curves equipped with $n$ marked points, in which a subset of these marked points is allowed to coincide as long as the sum of their weights is at most one.  The stability condition on such curves is that, for each irreducible component with $n_0$ half-nodes and marked points in $I_0 \subseteq [n]$, one has
\[n_0 + \sum_{i \in I_0} w_i > 2.\]
Hassett constructed these moduli spaces in \cite{HASSETT2003316}, and moreover, he proved that if $w_i' \leq w_i$ for each $i$, then there is a birational weight-reduction morphism
\[\M_{0,\bf{w}} \rightarrow \M_{0, \bf{w}'}\]
whose exceptional locus can be expressed explicitly as a union of boundary divisors.

In addition to the inclusion into a Hassett space that arises from the construction of the moduli space, $\L^r_n$ carries another key morphism to a Hassett space, which is the quotient map $C \mapsto C/\sigma$.  The codomain of this map is the space $\M^1_n$ introduced in \cite[Section 3.1]{CDHLR}.  Namely, $\M^1_n = \M_{0,\w}$, where the weight vector is
\[\w=\left(\frac{1}{2}+\varepsilon,\frac{1}{2}+\varepsilon, 1, \underbrace{\varepsilon, \ldots, \varepsilon}_{n \text{ copies}}\right)\]
for any $0 < \varepsilon < 1/(2n+2)$.  A sample element of $\M^1_n$---which should be viewed as a single spoke of a curve in $\L^r_n$---is shown in Figure~\ref{fig:M16}.

\begin{figure}[ht]
    \centering
    \begin{tikzpicture}
\draw[thick](0,0) circle (1);
\draw[thick](2,0) circle (1);
\draw[thick](4,0) circle (1);
\draw[thick](6,0) circle (1);
\filldraw (-0.1,0.3) circle[radius=1.8pt];
\filldraw (-0.1,-0.5) circle[radius=1.8pt];
\filldraw (1.6,0.4) circle[radius=1.8pt];
\filldraw (1.8,-0.2) circle[radius=1.8pt];
\filldraw (2.2,-0.5) circle[radius=1.8pt];
\filldraw (4.3,-0.5) circle[radius=1.8pt];
\filldraw (6.1,+0.5) circle[radius=1.8pt];
\filldraw (5.7,-0.3) circle[radius=1.8pt];
\node[above right] at (-0.1,0.3) {$x^-$};
\node[above right] at (-0.1,-0.5) {$x^+$};
\node[above right] at (1.6,0.4) {$z_5$};
\node[above right] at (1.8,-0.2) {$z_1$};
\node[above right] at (2.2,-0.5) {$z_2$};
\node[above] at (4.3,-0.5) {$z_4$};
\node[right] at (6.1,+0.5) {$z_3$};
\node[right] at (5.7,-0.3) {$y$};
\end{tikzpicture}
    \caption{A point of $\M^1_6$}
    \label{fig:M16}
\end{figure}

\begin{remark}
\label{rem:r=1}
As observed in \cite[Remark 8.1]{CDHLR}, the space $\M^1_n$ can alternatively be viewed as the result of setting $r=1$ in the definition of $\L^r_n$.
\end{remark}

For the purpose of realizing $\L^r_n$ as a wonderful compactification, we also require an analogue of the space $\M^1_n$ in which the points $z_i$  are allowed to coincide with $y$.  Specifically, let $X_0= \M_{0,\w_0}$ be the Hassett space with weight vector 
\[\w_0 := \left(\frac{1}{2}+\varepsilon, \frac{1}{2} + \varepsilon, 1-n\varepsilon, \underbrace{\varepsilon, \ldots, \varepsilon}_{n \text{ copies}}\right),\]
where $\varepsilon \in \Q$ is such that $0 < \varepsilon \leq 1/(2n+2)$.  Then
\[X_0 = (\P^1)^n,\]
since the weights ensure that the curves parameterized by $X_0$ consist of a single component.  Because $X_0$ differs from $\M^1_n$ only in that the weight on the marked point $y$ is reduced, there is a weight-reduction morphism
\[c: \M^1_n \rightarrow (\P^1)^n.\]
There is also an analogous morphism
\[b: \L^r_n \rightarrow (\P^1)^n,\]
which can be viewed as the composition of the forgetful map
\begin{align*}
     \L^r_n &\rightarrow \M^1_n\\
     (C; x^{\pm}, \{z_i^j\}, \{y^\ell\}) &\mapsto (C; x^{\pm}, \{z_i^0\}, y^0)
\end{align*}
with the map $c$.

\begin{remark}
\label{rem:lightLrn}
It is helpful---though not logically necessary---to view the codomain of $b$ as itself a moduli space, parameterizing analogous objects to those parameterized by $\L^r_n$ but in which all $n$ of the light orbits are allowed to coincide with the heavy orbit.  From this perspective, $b$ is also a weight-reduction morphism.
\end{remark}

Now, let
\[p: \L^r_n \rightarrow \M^1_n\]
be the morphism that sends an $(r,n)$-curve $C$ to the quotient of $C/\sigma$.  Then these morphisms fit together into a diagram
\begin{equation}
    \label{eq:Cartesian1}
\xymatrix{
\L^r_n\ar[r]^{b}\ar[d]_{p} & (\P^1)^n\ar[d]^{q}\\
\M^1_{n}\ar[r]_{c} & (\P^1)^n,
}
\end{equation}
where $q: (\P^1)^n \rightarrow (\P^1)^n$ is the ramified cover
\begin{equation}
    \label{eq:q}
q(p_1, \ldots, p_n) = (p_1^r, \ldots, p_n^r).
\end{equation}
See Figure~\ref{fig:Cartesian} for a depiction of the maps in this diagram.  

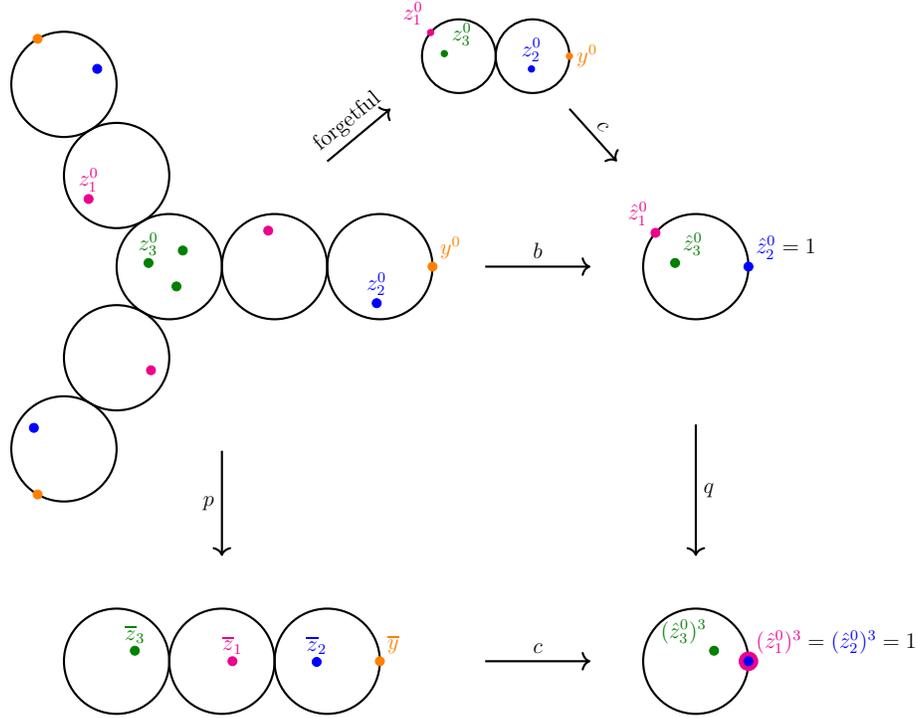
\begin{figure}[ht]
    \centering
\definecolor{ao(english)}{rgb}{0.0, 0.5, 0.0}
\begin{tikzpicture}[scale = 0.7]
\draw[thick] (0:0) circle (1);
\draw[thick] (0:2) circle (1);
\draw[thick] (120:2) circle (1);
\draw[thick] (240:2) circle (1);
\draw[thick] (0:4) circle (1);
\draw[thick] (120:4) circle (1);
\draw[thick] (240:4) circle (1);
\filldraw[color=ao(english)] (50:0.4) circle[radius=2.4pt];
\filldraw[color=magenta] (20:2) circle[radius=2.4pt];
\filldraw[color=blue] (-10:4) circle[radius=2.4pt];
\filldraw[color=orange] (0:5) circle[radius=2.4pt];
\filldraw[color=ao(english)] (170:0.4) circle[radius=2.4pt];
\filldraw[color=magenta] (140:2) circle[radius=2.4pt];
\filldraw[color=blue] (110:4) circle[radius=2.4pt];
\filldraw[color=orange](120:5) circle[radius=2.4pt];
\filldraw[color=ao(english)] (290:0.4) circle[radius=2.4pt];
\filldraw[color=magenta] (260:2) circle[radius=2.4pt];
\filldraw[color=blue] (230:4) circle[radius=2.4pt];
\filldraw[color=orange] (240:5) circle[radius=2.4pt];

\node[above, color=ao(english),scale=0.7] at (170:0.4) {${z_3^0}$};
\node[above ,color=magenta,scale=0.7] at  (140:2) {${z_1^0}$};
\node[above ,color=blue,scale=0.7] at (-10:4) {${z_2^0}$};
\node[above right ,color=orange,scale=0.7] at (0:5) {${y^0}$};

\draw [thick,->] (6,0) --(8,0)  node[pos=.5,above,scale=0.7] {$b$};
\begin{scope}[shift={(10,0)}]
\draw[thick] (0:0) circle (1);
\filldraw[color=ao(english)] (170:0.4) circle[radius=2.4pt];
\filldraw[color=magenta] (140:1) circle[radius=2.4pt];
\filldraw[color=blue] (0:1) circle[radius=2.4pt];
\node[above right, color=ao(english), scale=0.7] at (170:0.4) {$\hat{z}_3^0$};
\node[above left,color=magenta,scale=0.7] at  (140:1) {$\hat{z}_1^0$};
\node[above right,scale=0.7] at (0:1) {$\textcolor{blue}{\hat{z}_2^0}=1$};
\end{scope}
\draw [thick,->] (1,-3.5) --(1,-5.5)  node[pos=.5, left,scale=0.7] {$p$};
\begin{scope}[shift={(-1,-7.5)}]
\draw[thick] (0:0) circle (1);
\draw[thick] (0:2) circle (1);
\draw[thick] (0:4) circle (1);
\filldraw[color=ao(english)] (30:0.4) circle[radius=2.4pt];
\filldraw[color=magenta] (0:2.2) circle[radius=2.4pt];
\filldraw[color=blue] (-0.2:3.8) circle[radius=2.4pt];
\filldraw[color=orange] (0:5) circle[radius=2.4pt];

\node[above, color=ao(english),scale=0.7] at (30:0.4) {$\overline{z}_3$};
\node[above,color=magenta,scale=0.7] at  (0:2.2) {$\overline{z}_1$};
\node[above,color=blue,scale=0.7] at (-0.2:3.8) {$\overline{z}_2$};
\node[above right,color=orange,scale=0.7] at (0:5) {$\overline{y}$};

\end{scope}
\draw [thick,->] (6,-7.5) --(8,-7.5)  node[pos=.5,above,scale=0.7] {$c$};
\draw [thick,->] (10,-3) --(10,-5.5) node[pos=.5, right,scale=0.7] {$q$};
\begin{scope}[shift={(10,-7.5)}]
\draw[thick] (0:0) circle (1);
\filldraw[color=ao(english)] (30:0.4) circle[radius=2.4pt];
\filldraw[color=magenta] (0:1) circle[radius=5pt];
\filldraw[color=blue] (0:1) circle[radius=2.4pt];
\node[above left, color=ao(english),scale=0.7] at (30:0.4) {$(\hat{z}_3^0)^3$};
\node[above right,scale=0.7] at  (0:1) {$\textcolor{magenta}{(\hat{z}_1^0)^3}=\textcolor{blue}{(\hat{z}_2^0)^3}=1$};
\end{scope}
\begin{scope}[shift={(5.5,4)},scale=0.7]
\draw[thick] (0:0) circle (1);
\draw[thick] (0:2) circle (1);
\filldraw[color=blue] (-10:2) circle[radius=2.4pt];
\filldraw[color=orange] (0:3) circle[radius=2.4pt];
\filldraw[color=ao(english)] (170:0.4) circle[radius=2.4pt];
\filldraw[color=magenta] (140:1) circle[radius=2.4pt];

\node[above right, color=ao(english),scale=0.7] at (170:0.4) {$z_3^0$};
\node[above left,color=magenta,scale=0.7] at  (140:1) {$z_1^0$};
\node[above,color=blue,scale=0.7] at (-10:2) {$z_2^0$};
\node[right,color=orange,scale=0.7] at (0:3) {$y^0$};

\end{scope}
\draw [thick,->] (3,2) --(4.2,3) node[pos=.5,sloped,above,scale=0.7] {forgetful};
\draw [thick,->] (7.6,3) --(8.5,2) node[pos=.5,sloped,above, scale = 0.7] {$c$};
\end{tikzpicture}
    \caption{A representation of the maps in diagram \eqref{eq:Cartesian1} in the case where $r=n=3$, with the points $x^{\pm}$ omitted for clarity.  In the upper-right corner, the three coordinates in $(\P^1)^3$ are $\zeta$, $1$, and a point $\hat{z}_3^0$ that is not a $3$rd root of unity.  In the lower-right, the three coordinates are $1$, $1$, and $(\hat{z}_3^0)^r$.}
    \label{fig:Cartesian}
\end{figure}

In fact, \eqref{eq:Cartesian1} is Cartesian.  Heuristically, this makes sense: a curve in $\M^1_n$ specifies a single spoke of a curve in $\L^r_n$, which determines the entire element of $\L^r_n$ modulo the ordering of the points within each orbit, while a point in $(\P^1)^n$ determines the choice of which point within each orbit shall be labeled $z_i^0$.  We make this argument precise in the following lemma.

\begin{lemma}
\label{lem:Cartesian} The diagram \eqref{eq:Cartesian1} is Cartesian.
\end{lemma}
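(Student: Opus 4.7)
\emph{Proof plan.} The plan is to show that the natural morphism
$$\Psi := (p, b) \colon \L^r_n \to \M^1_n \times_{(\P^1)^n} (\P^1)^n$$
is an isomorphism by producing a functorial inverse $\Phi$. Since $\L^r_n$ is a fine moduli space, it suffices to construct, for each test scheme $B$, a map from $\left(\M^1_n \times_{(\P^1)^n} (\P^1)^n\right)(B)$ to $\L^r_n(B)$ that is inverse to $\Psi$ on $B$-points. Such a point is a family $\pi \colon \mathcal{C}' \to B$ in $\M^1_n(B)$ together with a section $\mathbf{p} = (p_1, \dots, p_n) \colon B \to (\P^1)^n$ satisfying $c([\mathcal{C}']) = q \circ \mathbf{p}$, and from this data we must manufacture a family of stable $(r,n)$-curves.

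The key construction is the cyclic $r$-fold cover $\rho \colon \mathcal{C} \to \mathcal{C}'$ totally ramified along the two sections $x^\pm$ and unramified elsewhere; locally on the central component of a fiber, coordinatized so that $x^- = 0$ and $x^+ = \infty$, this is the classical map $z \mapsto z^r$, while the trailing chain of $\mathcal{C}'$ carrying $y$ and some of the $z_i$ pulls back to $r$ disjoint copies meeting the central cover at the $r$ preimages of the adjacent node---producing exactly the pinwheel shape of an $(r,n)$-curve. The deck action supplies the order-$r$ automorphism $\sigma$. For each $i$, the section $p_i$ determines a unique lift of $z_i$ to a section of $\mathcal{C}$, which I designate $z_i^0$; the compatibility $c([\mathcal{C}']) = q \circ \mathbf{p}$ ensures precisely that $p_i$ does lift $z_i$. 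The remaining light orbit members are $z_i^j := \sigma^j(z_i^0)$, and for the heavy orbit, $y^0$ is determined canonically by the trivialization fixed via the primitive root of unity $\zeta$ defining the component $\L^r_n(\zeta)$, with $y^\ell := \sigma^\ell(y^0)$.

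Once this is in place, the compositions $\Psi \circ \Phi$ and $\Phi \circ \Psi$ reduce to tautological identifications: the quotient $\mathcal{C}/\sigma$ is canonically $\mathcal{C}'$, the marked points match up by construction, and applying $b$ to $\mathcal{C}$ recovers $\mathbf{p}$. The main obstacle I anticipate lies in verifying that the cover construction works uniformly over the boundary, where some $z_i$ may coincide with $x^\pm$ or with each other and where $\mathcal{C}'$ may be reducible. In those degenerate configurations, the preimage of a light orbit is no longer a free $\sigma$-orbit of size $r$, but the stability conditions laid out in \cite[Section 2.1]{CDHLR} are exactly tailored to accommodate this, and cyclic covers commute with flat base change, so a careful local analysis at each node and marked point should carry the argument through.
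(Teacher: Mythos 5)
Your geometric picture is the right one---the paper's family $\mathcal{C}^r_n$ is indeed a degree-$r$ cover of the quotient family, totally ramified at $x^\pm$, with the spokes pulling back to $r$ disjoint copies and $\sigma$ acting as the deck transformation---but your plan leaves the actual construction, which is the entire content of the proof, as an assertion. A ``cyclic $r$-fold cover of $\mathcal{C}'$ ramified exactly along $x^\pm$'' of a nodal family over an arbitrary base $B$ is not a canonical object one can simply invoke: you must produce it functorially in $B$ (otherwise you do not get a morphism $B \to \L^r_n$), prove it is flat over $B$, and specify the gluing along the preimages of the nodes where the spokes attach. Deferring this to ``a careful local analysis at each node and marked point'' is exactly the gap. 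Moreover, your labeling step is not well-posed as stated: in an abstract cover the preimage of $z_i$ is an unlabeled $\sigma$-orbit, and a point $p_i \in \P^1$ cannot single out a member of it until the cover is equipped with a $B$-morphism to $\P^1$ compatible with $q$ and with the contraction $c$; the same issue afflicts the ``canonical'' choice of $y^0$, which you attribute to an unspecified trivialization.

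The paper's device resolves all of these at once, and it is the idea your proposal is missing: upgrade $c$ and $q$ to the family level, $\widetilde{c}\colon \mathcal{C}^1_n \to B \times \P^1$ and $\widetilde{q}(b,p) = (b,p^r)$, and \emph{define} $\mathcal{C}^r_n$ as their fiber product. This builds the cover functorially over $B$; the second projection $\widetilde{b}\colon \mathcal{C}^r_n \to B\times\P^1$ is then available by construction, so the universal property of the fiber product produces the automorphism $\sigma$ (from $\overline{\sigma}(b,p)=(b,\zeta p)$) and the labeled sections $z_i^j$, $y^\ell$ from the pairs $\bigl(z_i,\ \hat{z}_i^j(b)=(b,\zeta^j\beta_i(b))\bigr)$ and $\bigl(y,\ \hat{y}^\ell(b)=(b,\zeta^\ell)\bigr)$---this is precisely where the compatibility $c\circ\rho = q\circ\beta$ enters, making your ``unique lift'' rigorous. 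Flatness over $B$ also becomes a two-chart check: $\widetilde{p}$ is \'etale away from $\widetilde{b}^{-1}(B\times\{0,\infty\})$ and $\widetilde{b}$ is an isomorphism away from $\widetilde{p}^{-1}\widetilde{c}^{-1}(B\times\{1\})$. Until you either adopt this fiber-product construction or supply an equally functorial construction of the cover together with the map to $\P^1$ and its labeled sections, the central steps of your plan remain unproved.
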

\begin{proof} Let $B$ be any scheme, and suppose we are given morphisms $\rho: B \rightarrow \M^1_n$ and $\beta: B \rightarrow (\P^1)^n$ such that the diagram
\begin{equation*}
\xymatrix{
B\ar[r]^{\beta}\ar[d]_{\rho} & (\P^1)^n\ar[d]^{q}\\
\M^1_{n}\ar[r]_{c} & (\P^1)^n
}
\end{equation*}
commutes.  Our goal is to construct a map $B \rightarrow \L^r_n$, or in other words, a family of $(r,n)$-curves over $B$.

First, note that from the definition of $\M^1_n$ as a moduli space, the map $\rho$ induces a family $\pi^1_n: \mathcal{C}^1_n \rightarrow B$ of weighted-pointed curves over $B$, with sections $x^{\pm}, z_1, \ldots, z_n$, and $y$.  The map $c \circ \rho$ also induces a family of weighted-pointed curves over $B$, namely the family
\begin{equation}
    \label{eq:family1}
    \xymatrix{
B \times \P^1\ar[d]_{\pi_B}\\
B\ar@/^-1.5pc/_{\overline{x}^{\pm}, \overline{z}_1, \ldots, \overline{z}_n, \overline{y}}[u]}
\end{equation}
where the sections $\overline{x}^{\pm}, \overline{z}_1, \ldots, \overline{z}_n, \overline{y}$ are defined by
\begin{align*}
    &\overline{x}^+(b) = (b,\infty)\\
    &\overline{x}^-(b) = (b,0)\\
    &\overline{y}(b) = (b,1)\\
    &\overline{z}_i(b) = (b, \; (c \circ \rho)_i(b)),
\end{align*}
where $(c \circ \rho)_i(b) \in \P^1$ denotes the $i$th coordinate of $(c\circ \rho)(b) \in (\P^1)^n$.  Since the map $c: \M^1_n \rightarrow (\P^1)^n$ is a weight-reduction morphism between Hassett spaces, it can be upgraded to the level of families, yielding a morphism
\[\widetilde{c}: \mathcal{C}^1_n \rightarrow B \times \P^1\]
that takes the sections of $\mathcal{C}^1_n$ to the corresponding sections of $B \times \P^1$.

Next, note that the map $\beta$ also induces a family of weighted-pointed curves.  Taking the perspective of Remark~\ref{rem:lightLrn}, we view the family induced by $\beta$ as
\begin{equation}
    \label{eq:family2}
\xymatrix{
B \times \P^1\ar[d]_{\pi_B}\\
B\ar@/^-1.5pc/_{\hat{x}^{\pm}, \{\hat{z}_i^j\}, \{\hat{y}^\ell\}}[u]},
\end{equation}
where $\hat{x}^{\pm}=(\overline{x}^\pm)^r = \overline{x}^\pm$, and the remaining sections are defined by
\begin{align*}
    &\hat{y}^\ell(b) = (b,\zeta^\ell)\\
    &\hat{z}_i^j(b) = (b, \zeta^j \beta_i(b))
\end{align*}
for $\ell, j \in \Z_r$ and $i \in [n]$; note that this is a family of curves with marked points of weights
\[\left(\frac{1}{2}+\varepsilon, \frac{1}{2} + \varepsilon, \underbrace{1-n\varepsilon, \ldots, 1-n\varepsilon}_{r \text{ copies}}, \underbrace{\varepsilon, \ldots, \varepsilon}_{rn \text{ copies}}\right).\]
Since both \eqref{eq:family1} and \eqref{eq:family2} are trivial families, the morphism $q$ can be upgraded to a morphism between them: namely, we have
 \[\widetilde{q}: B \times \P^1 \rightarrow B \times \P^1\]
given by $\widetilde{q}(b,p) = (b,p^r)$, which fixes the sections $\overline{x}^{\pm}$ and takes $\hat{y}^\ell$ to $\overline{y}$ as well as $\hat{z}_i^j$ to $\overline{z}_i$ for each $i,j,$ and $\ell$.

Now, to produce the requisite family of $(r,n)$-curves, define $\mathcal{C}^r_n$ as the fiber product of the diagram
\begin{equation}
    \label{eq:CartesianCurves}
\xymatrix{
\mathcal{C}^r_n \ar[r]^-{\widetilde{b}}\ar[d]_{\widetilde{p}} & B \times \P^1\ar[d]^{\widetilde{q}}\\
\mathcal{C}^1_n \ar[r]_-{\widetilde{c}} & B \times \P^1.
}
\end{equation}
We claim, first, that $\mathcal{C}^r_n$ is a flat family of curves over $B$.  It is certainly equipped with a map $\pi: \mathcal{C}^r_n \rightarrow B$, namely
\[\pi := \widetilde{p} \circ \pi^1_n = \widetilde{b} \circ \pi_B.\]
To see that $\pi$ is flat, note that $\widetilde{q}$ is \'etale away from $B \times \{0,\infty\}$, so, since \'etaleness is preserved by base change, it follows that $\widetilde{p}$ is \'etale on $\mathcal{C}^r_n \setminus \widetilde{b}^{-1}(B \times \{0,\infty\})$.  In particular, then, the restriction of $\pi$ to this locus is the composition of an \'etale morphism with the flat morphism $\pi^1_n$, so it is flat.  On the other hand, the map $\widetilde{c}$ is an isomorphism away from $\widetilde{c}^{-1}(B \times \{1\})$, so it follows that $\widetilde{b}$ is an isomorphism on $\mathcal{C}^r_n \setminus \widetilde{p}^{-1}(\widetilde{c}^{-1}(B \times \{1\}))$.  As a result, the restriction of $\pi$ to this locus is the composition of an isomorphism with the flat morphism $\pi_B$, so it is flat.  Having covered $\mathcal{C}^r_n$ by open sets on which $\pi$ is flat, we conclude that $\mathcal{C}^r_n$ is indeed a flat family of curves over $B$.

In order to make $\mathcal{C}^r_n$ into a family of $(r,n)$-curves, we must equip it with an order-$r$ automorphism and sections. For the first of these, let 
\[\overline{\sigma}: B \times \P^1 \rightarrow B \times \P^1\]
be the automorphism $\overline{\sigma}(b,p) = (b, \zeta p)$.  Then we have a diagram
\[\xymatrix{
\mathcal{C}^r_n \ar[r]^-{\overline{\sigma} \circ \widetilde{b}} \ar[d] & B \times \P^1\ar[d]^{\widetilde{q}}\\
\mathcal{C}^1_n \ar[r]_-{\widetilde{c}} & B \times \P^1,
}\]
and the universal property of $\mathcal{C}^r_n$ as a fiber product yields a morphism $\sigma: \mathcal{C}^r_n \rightarrow \mathcal{C}^r_n$ that is easily confirmed to be an order-$r$ automorphism over $B$.

The construction of the sections is similar; in particular, by the universal property of fiber products, a section of $\mathcal{C}^r_n$ is determined by sections of $\mathcal{C}^1_n$ and $B \times \P^1$.  We define $x^\pm$ as the section determined by the section $x^\pm$ of $\mathcal{C}^1_n$ and $\overline{x}^{\pm}$ of $B \times \P^1$, define $y^\ell$ as the section determined by $y$ and $\hat{y}^\ell$, and define $z_i^j$ as the section determined by $z_i$ and $\hat{z}_i^j$.  From here, it is straightforward to check that each fiber
\[(\pi^{-1}(b); x^{\pm}(b)), \{z_i^j(b)\}, \{y^\ell(b)\})\]
of $\pi$ is indeed a stable $(r,n)$-curve.  Thus, we have given $\mathcal{C}^r_n$ the structure of an $(r,n)$-curve over $B$, meaning that we have a map $B \rightarrow \L^r_n$.  By construction, this map makes the diagram
\[\xymatrix{
B \ar[dr]\ar@/^+1.5pc/[drr]^{\beta}\ar@/^-1.5pc/[ddr]_{\rho} & & \\
& \L^r_n\ar[r]^{b}\ar[d]_{p} & (\P^1)^n\ar[d]^{q}\\
& \M^1_{n}\ar[r]_{c} & (\P^1)^n
}\]
commute, so the proof is complete.
\end{proof}

\subsection{The moduli space as a wonderful compactification}

We are now prepared to describe how $\L^r_n$ arises as a wonderful compactification.  The ambient variety is $(\P^1)^n$, and in this variety, we consider the arrangement consisting of the hyperplanes
\begin{equation}
\label{eq:Hij}\widetilde{H}_i^j = \{(p_1, \ldots, p_n) \in (\P^1)^n \; | \; p_i = \zeta^j\}
\end{equation}
for each $i \in [n]$ and $j \in \Z_r$.  Note that this is the product arrangement induced by $n$ copies of the hyperplane arrangement
\begin{equation}
    \label{eq:Ar}
    \mathcal{A}_r := \Big\{\{1\}, \{\zeta\}, \{\zeta^2\}, \ldots, \{\zeta^{r-1}\}\Big\}
\end{equation}
in $\P^1$, where $\zeta$ is our fixed primitive $r$th root of unity.

\begin{theorem}
\label{thm:Lrnwc}
For any $r \geq 2$ and $n \geq 0$, the moduli space $\L^r_n$ is the wonderful compactification of the arrangement 
\[\{\widetilde{H}_i^j\}_{i \in [n], \; j \in \Z_r}\]
in $(\P^1)^n$, with maximal building set.
\end{theorem}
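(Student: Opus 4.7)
The plan is to derive the statement from the Cartesian diagram of Lemma~\ref{lem:Cartesian} by first establishing an analogous description of $\M^1_n$ as a wonderful compactification of an arrangement in $(\P^1)^n$, and then transferring this description to $\L^r_n$ via the finite base change $q$.

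For the first step, I would prove that $\M^1_n$ is the wonderful compactification (with maximal building set) of the arrangement
\[\mathcal{A}' := \bigl\{\{\overline{z}_i = 1\}\bigr\}_{i \in [n]}\]
in $X_0 = (\P^1)^n = \M_{0,\w_0}$. The weight-reduction morphism $c : \M^1_n \to X_0$ raises the weight of the marked point $y$ from $1 - n\varepsilon$ to $1$, which forbids $y$ from colliding with any subset of the $\{z_i\}$. Its exceptional locus is therefore a union of boundary divisors of $\M^1_n$ indexed by subsets $I \subseteq [n]$ with $|I| \geq 2$, one for each stratum where $y$ and the $z_i$'s for $i \in I$ have been pushed onto a rational tail; the image of this divisor in $X_0$ is precisely $\bigcap_{i \in I}\{\overline{z}_i = 1\}$. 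A standard induction on $|I|$, organized via the forgetful morphism that removes a single light point, should identify $c$ with the iterated blow-up of $X_0$ along these subvarieties in order of increasing $|I|$. Since the codimension-$1$ elements of $\mathcal{L}_{\mathcal{A}'}$ produce only trivial blow-ups, this agrees with Li~Li's construction of the wonderful compactification of $\mathcal{A}'$ with maximal building set, as in \cite[Definition~2.12]{LiLi}.

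For the second step, I would apply base change along $q(p_1, \ldots, p_n) = (p_1^r, \ldots, p_n^r)$. This map is étale on $(\C^*)^n$, which contains every center of blow-up arising in step one. For each subset $I \subseteq [n]$, the preimage decomposes as
\[q^{-1}\!\left(\,\bigcap_{i \in I}\{\overline{z}_i = 1\}\right) \;=\; \bigsqcup_{\alpha : I \to \Z_r} \bigcap_{i \in I} \widetilde{H}_i^{\alpha(i)},\]
a disjoint union of $r^{|I|}$ smooth components indexed by the $\Z_r$-decorations of $I$; these components are exactly the elements of $\mathcal{L}_{\mathcal{A}} \setminus \{\hat{0}\}$ lying above $\bigcap_{i \in I}\{\overline{z}_i = 1\}$. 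Since iterated blow-ups commute with flat base change, and since blow-ups along pairwise disjoint centers may be carried out independently and in any order, the base change of the iterated blow-up realizing $c$ is the iterated blow-up of $(\P^1)^n$ along every element of $\mathcal{L}_{\mathcal{A}} \setminus \{\hat{0}\}$, taken in an order compatible with inclusion. By Lemma~\ref{lem:Cartesian} this base change coincides with $b : \L^r_n \to (\P^1)^n$, and by Li~Li's construction it is the wonderful compactification of $\mathcal{A}$ with maximal building set.

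The principal obstacle is step one. Although the contracting behavior of the weight-reduction morphism $c$ is well understood from Hassett's theory, pinning down $c$ as exactly Li~Li's iterated blow-up in the correct order---in particular, verifying that the proper transforms of the higher-codimension centers remain smooth and transversal to the previous exceptional divisors---requires careful bookkeeping. The cleanest route is probably an induction on $n$ via the forgetful map $\M^1_n \to \M^1_{n-1}$, which reduces the problem to resolving the interaction between $y$ and a single new light point at a time; once step one is in place, the base-change argument of step two is essentially formal.
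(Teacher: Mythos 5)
Your overall strategy is the same as the paper's: first describe the weight-reduction morphism $c:\M^1_n \to (\P^1)^n$ as an iterated blow-up along the loci $\bigcap_{i \in I}\{\overline{z}_i = 1\}$, then transfer this description to $\L^r_n$ by base change along the flat finite map $q$, using Lemma~\ref{lem:Cartesian} to identify the resulting fiber product with $\L^r_n$. The genuine gap is precisely the step you flag as the principal obstacle. The paper does not run an induction through the forgetful maps $\M^1_n \to \M^1_{n-1}$; instead it interpolates between $(\P^1)^n$ and $\M^1_n$ by the Hassett spaces $X_k = \M_{0,\w_k}$, raising the weight of $y$ by $\varepsilon$ at each step, so that each reduction morphism $c_k: X_{k+1} \to X_k$ is a \emph{simple} wall-crossing and is therefore a blow-up along the proper transform of the wall locus $Z_k$ by \cite[Theorem 4.8]{AG}. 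This single citation replaces all of the bookkeeping you anticipate (smoothness and transversality of the proper transforms of the centers), and it also produces the blow-ups in the order the theorem needs. On that point your write-up has an error: you propose blowing up ``in order of increasing $|I|$,'' i.e.\ the codimension-two centers first, but Li Li's construction (and the actual factorization of $c$ through the $X_k$) requires an order compatible with inclusion, meaning the \emph{minimal} elements of the intersection lattice first---the point $\{z_1 = \cdots = z_n = y\}$ (so $|I| = n$) before any larger center. With your order the procedure is not even well defined, since a deeper center is contained in a shallower one and so has no proper transform after the first blow-up; and matching Li Li's prescribed order is exactly what identifies the result with the wonderful compactification, so this is not cosmetic. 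Your step two silently corrects this by saying ``in an order compatible with inclusion,'' but that contradicts the order you set up in step one.

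Your step two is otherwise sound and is essentially what the paper does, except that the paper performs the base change stage by stage along the tower (defining $Y_{k+1}$ as the fiber product of $Y_k$ and $X_{k+1}$ over $X_k$) rather than base-changing the completed iterated blow-up; both work. One small correction: the centers $\bigcap_{i\in I}\{\overline{z}_i = 1\}$ are \emph{not} contained in the \'etale locus $(\C^*)^n$ of $q$ when $|I| < n$ (the unconstrained coordinates may be $0$ or $\infty$), but this does not matter---all you need is that $q$ is flat, so that blow-ups commute with the base change (as in \cite[Exercise 24.2.P]{VakilBook}), together with your correct observation that $q^{-1}$ of each center splits into the $r^{|I|}$ disjoint smooth pieces $H_{\tI}$, which may then be blown up in any mutual order.
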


\begin{proof}
Our goal is to realize $\L^r_n$ as an iterated blow-up of $(\P^1)^n$ as described in Section~\ref{subsec:WCbackground}, and the first key observation is that for $\M^1_n$, the analogous result holds. Specifically, for any $k \in \{0,1, \ldots, n\}$, let $X_k = \M_{0,\w_k}$ be the Hassett space with weight vector
\[\w_k := \left(\frac{1}{2}+\varepsilon, \frac{1}{2} + \varepsilon, 1-(n-k)\varepsilon, \underbrace{\varepsilon, \ldots, \varepsilon}_{n \text{ copies}}\right),\]
where, once again, $\varepsilon \in \Q$ is such that $0< \varepsilon \leq 1/(2n+2)$; this space parameterizes the same objects as $\M^1_n$, but in which $n-k$ of the light points $z_i$ are allowed to coincide with $y$.  When $k=0$, we obtain the space $X_0$ described in the previous section, which can be identified with $(\P^1)^n$; and when $k=n$, we obtain $X_n = \M^1_n$.

Each of the spaces $X_{k+1}$ is obtained from $X_k$ by blow-up along a smooth subvariety.  Indeed, if we let $Z_k \subseteq X_k$ be the locus where $n-k$ of the points $z_i^j$ coincide with $y$, then
 \begin{itemize}
     \item $X_1$ is the blow-up of $X_0$ along $Z_0$,
     \item $X_2$ is the blow-up of $X_1$ along the proper transform of $Z_1$,
     \item $X_3$ is the blow-up of $X_2$ along the proper transform of $Z_2$,
 \end{itemize}
and so on.  The proofs of these statements follow from \cite[Theorem 4.8]{AG}, which shows that the weight-reduction morphism $c_k: X_{k+1} \rightarrow X_k$ is a blow-up when the change of weights is a ``simple'' wall-crossing (see \cite[Definition 4.1]{AG}), which is true in this case.

Now, we inductively define spaces $Y_k$ with maps $q_k: Y_k \rightarrow X_k$, for each $k \in \{0,1,\ldots, n\}$, as follows.  When $k=0$, set $Y_k = (\P^1)^n$, and set $q_0: Y_0 \rightarrow X_0$ to be the map $(\P^1)^n \rightarrow (\P^1)^n$ given by \eqref{eq:q}.  Then, having defined $Y_k$ and $q_k$, define $Y_{k+1}$ and $q_{k+1}$ by the following Cartesian diagram:
\begin{equation}
    \label{eq:Cartesian}
\xymatrix{
Y_{k+1} \ar[r]^{b_k}\ar[d]_{q_{k+1}} & Y_k\ar[d]^{q_k}\\
X_{k+1} \ar[r]_{c_k} & X_k.
}
\end{equation}
Note that each $q_k$ is flat (since $q= q_0$ is flat and \eqref{eq:Cartesian} is Cartesian), so since blow-ups commute with flat base change (see \cite[Exercise 24.2.P]{VakilBook}), the fact that $X_{k+1}$ is the blow-up of $X_k$ along $Z_k$ implies that $Y_{k+1}$ is the blow-up of $Y_k$ along $q_k^{-1}(Z_k)$.

Since $Y_0= (\P^1)^n$ and $Y_n = \L^r_n$ by Lemma~\ref{lem:Cartesian}, we have now shown that $\L^r_n$ is obtained from $(\P^1)^n$ by the following sequence of blow-ups:
\begin{itemize}
    \item blow up $(\P^1)^n$ along $q_0^{-1}(Z_0)$, which is the union of the points where all $n$ coordinates are equal to $r$th roots of unity;
    \item blow up along $q_1^{-1}(Z_1)$, which is the proper transform of the union of the lines in $(\P^1)^n$ where $n-1$ coordinates are equal to $r$th roots of unity;
    \item blow up along $q_2^{-1}(Z_2)$, which is the proper transform of the union of the planes in $(\P^1)^n$ where $n-2$ coordinates are equal to $r$th roots of unity;
\end{itemize}
and so on.  In other words, we are iteratively blowing up $(\P^1)^n$ along all intersections of the hyperplanes \eqref{eq:Hij}, in increasing order with respect to inclusions.  This is precisely the construction of the wonderful compactification of this arrangement (with its maximal building set), so the proof is complete.
\end{proof}

Observe that by Remark~\ref{rem:r=1}, $\M^1_n$ can be viewed as the $r=1$ case of the space $\Lrn$. Thus, the first part of the above proof can be interpreted as showing that, also in this limit case, $\L^1_n$ is an iterated blow-up of $(\P^1)^n$ and can be seen as a wonderful compactification for a non-essential hyperplane arrangement.

\section{The Chow ring of \texorpdfstring{$\Lrn$}{the moduli space}}
\label{sec:chowring}

The presentation of $\L^r_n$ as a wonderful compactification via Theorem~\ref{thm:Lrnwc}, together with the result of Theorem~\ref{thm:wcproduct}, allows us to calculate $A^*(\L^r_n)$, and the goal of this section is to carry out this computation explicitly.

\subsection{The nested set fan for \texorpdfstring{$\L^r_n$}{the moduli space}}

By Theorem~\ref{thm:wcproduct}, the Chow ring of a wonderful compactification is determined by its nested set fan.  Our first goal, then, is to describe the nested set fan of the arrangement
\begin{equation*}
    \label{eq:A}
\calA = \{\widetilde{H}_i^j\}_{i \in [n], j \in \Z_r}
\end{equation*}
in $(\P^1)^n$ given by \eqref{eq:Hij}, with its maximal building set $\calG = \calL_{\calA} \setminus \{\hat{0}\}$.

We require two pieces of combinatorial terminology, both of which appeared in \cite{CDHLR}.

\begin{definition}
\label{def:decoratedchain}
A {\bf $\Z_r$-decorated subset of $[n]$} is a pair $\tI = (I, \a)$, in which $I \subseteq [n]$ is a nonempty subset and $\a: I \rightarrow \Z_r$ is any function.  More generally, a {\bf $\Z_r$-decorated chain of subsets of $[n]$} (or simply {\bf chain}, for short) is a tuple
\[\tbI = (I_1, \ldots, I_\ell, \a),\]
where
\[\emptyset = I_0 \subsetneq I_1 \subsetneq \cdots \subsetneq I_\ell \subseteq [n]\]
and
\[\a: I_\ell \rightarrow \Z_r.\]
We refer to the number $\ell$ as the {\bf length} of the chain.
\end{definition}

From the definition of the hyperplanes $\widetilde{H}_i^j$ in \eqref{eq:Hij}, one sees that the intersection $\widetilde{H}_i^j \cap \widetilde{H}_i^{j'}$ is empty unless $j = j'$, whereas all of the intersections $\widetilde{H}_i^j \cap \widetilde{H}_{i'}^{j'}$ with $i \neq i'$ are nonempty.  It follows that the elements of the intersection lattice $\calL_{\calA}$ are precisely the intersections
\[H_{\tI} := \bigcap_{i \in I} \widetilde{H}_i^{-\a(i)} \]
for each decorated set $\tI = (I,\a)$. 

\begin{remark}
The negative exponents in the definition of $H_{\tI}$ may look strange at a glance, but this convention is chosen for consistency with the indexing of boundary strata by chains in \cite{CDHLR}; see Remark~\ref{rem:negative} below.
\end{remark}

Given that  $\calG$ is the maximal building set, the $\calG$-nested sets are simply chains in $\calL_{\calA} \setminus \{\hat{0}\}$ as a poset.  The ordering on $\calL_{\calA}$ is by reverse inclusion, and from this one sees that
\[H_{\tilde{I}} \leq H_{\tilde{J}} \; \text{ if and only if } \; \tI \leq \widetilde{J},\]
where the ordering on decorated sets is given by
\[(I, \a) \leq (J,\b)  \; \text{ if and only if } \;I \subseteq J \text{ and } \a(i) = \b(i) \;\; \text{ for all } i \in I.\]
As a result, the $\calG$-nested sets are indexed by chains in the sense of Definition~\ref{def:decoratedchain}: namely, if $\tbI = (I_1, \ldots, I_\ell, \a)$ is a chain, then the corresponding $\calG$-nested set is
\[H_{\widetilde{I_1}} \leq H_{\widetilde{I_2}} \leq \cdots \leq H_{\widetilde{I_\ell}}.\]
Comparing this to Definition~\ref{def:nestedsetfan}, we see that the nested set fan for $(\calL_\calA, \calG)$, which we denote by $\Sigma^r_n$, can be described as follows.

\begin{definition}\label{def:sigma}
Let
\[V_{\calA}= (\R^r/\R)^{\oplus n},\]
and denote the images of the standard basis vectors in the $i$th copy of $\R^r/\R$ by $e_i^0, \ldots, e_i^{r-1}$.  Then $\Sigma^r_n$ is the fan in $V_{\calA}$ with a cone
\[\sigma_{\tbI} := \text{Cone}\left\{ \sum_{i \in I_1} e_i^{-\a(i)}, \ldots, \sum_{i \in I_\ell} e_i^{-\a(i)}\right\}\]
for each chain $\tbI$.  See Figure~\ref{fig:Sigmarn} for an illustration.
\end{definition}

\begin{figure}[ht]
    \centering
\definecolor{ao(english)}{rgb}{0.0, 0.5, 0.0}
\begin{tikzpicture}[xscale=0.6, yscale=0.6]
\draw [fill=gray!30!white,gray!30!white] (-2.4,-2.4) rectangle (2.4,2.4);
\draw [ao(english),fill=ao(english),opacity=0.3] (0,0) -- (0,2.4) -- (-2.4,2.4)  -- cycle;
\draw [ultra thick,->] (0,0) --(3,0);
\draw [ultra thick,->] (0,0) --(-3,0);
\draw [ao(english),ultra thick,->] (0,0) --(0,3);
\draw [ultra thick,->] (0,0) --(0,-3);
\draw [ultra thick,->] (0,0) --(2.7,2.7);
\draw [ao(english),ultra thick,->] (0,0) --(-2.7,2.7);
\draw [ultra thick,->] (0,0) --(2.7,-2.7);
\draw [ultra thick,->] (0,0) --(-2.7,-2.7);
\filldraw [ao(english)] (0,0) circle[radius=1mm];
\node at (0, -3.6) {$e_2^1$};
\node at (0, 3.6) {$e_2^0$};
\node at (3.6,0) {$e_1^0$};
\node at (-3.6,0) {$e_1^1$};
\node [ao(english)] at (-0.7,1.6) {$\sigma_{\tbI}$};
\end{tikzpicture}
\caption{The fan $\Sigma^2_2$.  The cone $\sigma_{\tbI}$ labeled in green corresponds to the chain $\tbI = (\{2\} \subseteq \{1,2\}, \; \a)$ in which $\a(1) =1$ and $\a(2)=0$.}
    \label{fig:Sigmarn}
\end{figure}
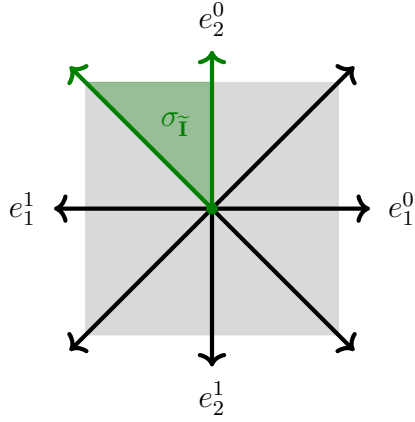

\begin{remark}
\label{rem:intersection}
The intersection $\sigma_{\tbI} \cap \sigma_{\tbJ}$ is the cone $\sigma_{\tbI \cap \tbJ}$, where $\tbI \cap \tbJ$ is the following chain. Let $\tbI=(I_1, \dots, I_{\ell_I}, \a)$ and $\tbJ=(J_1, \dots, J_{\ell_J}, \mathfrak{b})$, and define
\[(I\cap J)_{i,j}=\{ k \in I_i \cap I_j \; |\;  \a(k) = \b(k)\}.\]
The collection of subsets $(I\cap J)_{i,j}$ with $i \in [\ell_I]$ and $j \in [\ell_J]$ can be reordered to define a chain of subsets of $[n]$ such that the biggest one, given by $(I\cap J)_{\ell_I,\ell_J}$, admits a unique map to $\Z_r$ restricting $\a$ (or, equivalently, $\b$).
\end{remark}

\begin{remark}
\label{rem:stellar}
An alternative way to construct $\Sigma^r_n$, by Lemma~\ref{lem:stellar}, is via a stellar subdivision procedure.  Specifically, let $\Sigma_r$ be the nested set fan for the arrangement \eqref{eq:Ar} in $\P^1$ with its maximal building set; this is a $1$-dimensional fan in $\R^r/\R$ with $r$ rays spanned by the images of the standard basis vectors in $\R^r$.  Then the Cartesian product $(\Sigma_r)^{\times n}$ is a fan in $V_{\calA}$.  Recalling that $V_{\calA}$ has a vector
\[v_G := \sum_{\widetilde{H}^j_i \supseteq G} e_i^j\]
for each $G \in \calL_{\calA} \setminus \{\hat{0}\}$, the content of Lemma~\ref{lem:stellar} is that $\Sigma^r_n$ can be obtained from $(\Sigma_r)^{\times n}$ by successive stellar subdivision along the vectors $v_{H_{\tI}}$ for each nested set $\tI$ with $|I|>1$, in inclusion-increasing order with respect to the varieties $H_{\tI}$.  We illustrate this construction in an example in Figure~\ref{fig:stellar}.
\end{remark}

\begin{figure}[ht]
    \centering
    \begin{tikzpicture}[xscale=0.5, yscale=0.5]
\filldraw (0,0) circle[radius=1mm];
\draw [ultra thick,->] (0,0) --(3,0);
\draw [ultra thick,->] (0,0) --(-3,0);
\node at (0, -4) {$\Sigma_2$};

\begin{scope}[shift={(8,0)}]
\draw [fill=gray!30!white,gray!30!white] (-2.4,-2.4) rectangle (2.4,2.4);
\filldraw (0,0) circle[radius=1mm];
\draw [ultra thick,->] (0,0) --(3,0);
\draw [ultra thick,->] (0,0) --(-3,0);
\draw [ultra thick,->] (0,0) --(0,3);
\draw [ultra thick,->] (0,0) --(0,-3);
\node at (0, -4) {$(\Sigma_2)^{\times 2}$};
\end{scope}

\begin{scope}[shift={(16,0)}]
\draw [fill=gray!30!white,gray!30!white] (-2.4,-2.4) rectangle (2.4,2.4);
\draw [ultra thick,->] (0,0) --(3,0);
\draw [ultra thick,->] (0,0) --(-3,0);
\draw [ultra thick,->] (0,0) --(0,3);
\draw [ultra thick,->] (0,0) --(0,-3);
\draw [ultra thick,->] (0,0) --(2.7,2.7);
\draw [blue, ultra thick,->] (0,0) --(-2.7,2.7);
\draw [ultra thick,->] (0,0) --(2.7,-2.7);
\draw [ultra thick,->] (0,0) --(-2.7,-2.7);
\filldraw [blue] (0,0) circle[radius=1mm];

\node at (0, -4) {$\Sigma^2_2$};
\node [above left, blue] at (-2.7,2.7) {$v_{H_{\widetilde{I}}}$};

\end{scope}

\end{tikzpicture}
    \caption{The fan $\Sigma_2^2$, obtained via stellar subdivision from the Cartesian product of two copies of the fan $\Sigma_2$.  The labeled vector $v_{H_{\tI}}$ corresponds to the nested set $\tI = (\{1,2\}, \a)$ in which $\a(1) = 1$ and $\a(2)=0$.}
    \label{fig:stellar}
\end{figure}
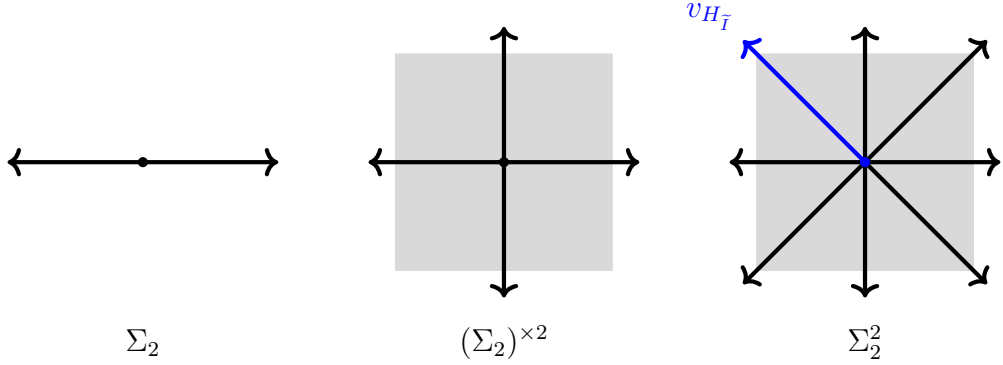

In light of the description of $\Sigma^r_n$ in Definition~\ref{def:sigma}, the torus-invariant strata in $X_{\Sigma^r_n}$ can be indexed by chains $\tbI$.  On the other hand, we proved in \cite{CDHLR} that the boundary strata of $\L^r_n$ are also indexed by chains, and in fact, the next section shows that the inclusion
\[\L^r_n \hookrightarrow X_{\Sigma^r_n}\]
provided by Theorem~\ref{thm:Lrnwc} matches these two types of strata with one another.  Before stating this result, we must recall the association of boundary strata with chains from \cite{CDHLR}.

\subsection{Boundary strata and chains}
\label{subsec:boundarystrata}

In order to describe the boundary strata of $\L^r_n$, we first explain how components of an $(r,n)$-curve are labeled.  Let $(C; \sigma; x^{\pm}, \{y^k\}, \{z_i^j\})$ be a stable $(r,n)$-curve, and suppose that $C$ has ``length'' $\ell$ in the sense that each of its $r$ spokes (chains of $\P^1$'s emanating from the central component) consists of $\ell$ components.  Then, for each $k \in \Z_r$, we denote the components of the spoke containing $y^k$ by
\[C^k_1, C^k_2, \ldots, C^k_\ell,\]
where $y^k \in C^k_1$ and the other components are labeled in order from outermost to innermost.  We denote the central component by $C_{\ell+1}$.

Given this labeling, the idea of the association of a boundary stratum to a chain is that the outermost components $\{C^k_1\}_{k \in \Z_r}$ contain the marked points indexed by $I_1$ (in an order dictated by $\a$), the next-outermost components $\{C^k_2\}_{k \in \Z_r}$ contain the marked points indexed by $I_2 \setminus I_1$, and so on, until $[n] \setminus I_{\ell}$, which indexes the marked points on the central component.  More precisely, the association is as follows.

\begin{definition}
\label{def:SI}
Let $\tbI = (I_1, \ldots, I_\ell, \a)$ be a chain. We say that $(C; \sigma\; x^{\pm}, \{y^k\}, \{z_i^j\}) \in \L^r_n$ is {of type $\tbI$} if $C$ is an $r$-pinwheel curve of length $\ell$ and, using the above notation, we have
\begin{enumerate}
    \item for each $j \in \{1, \ldots, \ell\}$, the light marked points on $C^0_j$ are precisely
    \[\{z_i^{\a(i)} \; | \; i \in I_j \setminus I_{j-1}\},\]
    where $I_0:= \emptyset$;
    \item the light marked points on the central component $C_{\ell+1}$ are
    \[\{z_i^k \; | \; i \in [n] \setminus I_\ell, \; k \in \Z_r\} \cup \{x^{\pm}\}.\]
\end{enumerate}
We define the boundary stratum $S_{\tbI} \subseteq \L^r_n$ to be the closure of the locus of curves of type $\tbI$. \end{definition}

The $(3,4)$-curve of Figure~\ref{fig:samplecurve}, for example, is a generic element of the boundary stratum $S_{\tbI}$ in which $\tbI = (I_1, I_2, \a)$ for
\[I_1 = \{3\}, \;\; I_2 = \{2,3,4\}\]
and $\a: I_2 \rightarrow \Z_3$ given by
\[\a(2) = 1, \;\; \a(3) = 0, \;\; \a(4) = 2.\]

\begin{remark}
\label{rem:negative} The first condition in Definition~\ref{def:SI} implies that, for an $(r,n)$-curve of type $\tbI$, the light marked point $z_i^0$ is on the same spoke of $C$ as $y^{-\a(i)}$.  Given that the positions of all other light marked points are determined by the location of the points $z_i^0$, this helps to explain why $-\a(i)$ appears in the definitions of $H_{\tI}$ and $\sigma_{\tbI}$ above.
\end{remark}

We proved in \cite[Proposition 5.4]{CDHLR} that the association $\tbI \mapsto S_{\tbI}$ is a bijection from chains to boundary strata in $\L^r_n$, and that under this bijection, the codimension of $S_{\tbI}$ corresponds to the length of $\tbI$ whereas an inclusion of boundary strata $S_{\tbI} \subseteq S_{\tbJ}$ corresponds to the statement that $\tbI$ ``refines'' $\tbJ$ in the sense of \cite[Definition 4.2]{CDHLR}.  In particular, the boundary divisors are associated to chains of length $1$, which are $\Z_r$-decorated subsets of $[n]$.  We denote by
\[D_{\tI} \subseteq \L^r_n\]
the boundary divisor corresponding to the decorated set $\tI = (I,\a)$.

Now, returning to the fan $\Sigma^r_n$ of Definition~\ref{def:sigma}, for any chain $\tbI$, denote by $X_{\tbI} \subseteq X_{\Sigma^r_n}$ the torus-invariant stratum associated to the cone $\sigma_{\tbI}$ of $\Sigma^r_n$.  Then we have the following correspondence between the strata $X_{\tbI}$ and the strata $S_{\tbI}$.

\begin{proposition}
\label{prop:strata}
Under the inclusion $\L^r_n \hookrightarrow X_{\Sigma^r_n}$ given by Theorems~\ref{thm:wcproduct} and \ref{thm:Lrnwc}, the pullback of the torus-invariant stratum $X_{\tbI}$ is the boundary stratum $S_{\tbI}$.  In particular, the pullback of the torus-invariant divisor $X_{\tI}$ is the boundary divisor $D_{\tI}$.
\end{proposition}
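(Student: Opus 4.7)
My plan is to match the torus-invariant strata of $X_{\Sigma^r_n}$ with the boundary strata of $\L^r_n$ dimension by dimension, starting with the divisors and extending by intersection. The key input is the general principle---implicit in the proof of Theorem \ref{thm:wcproduct}, and originally due to Feichtner--Yuzvinsky in the hyperplane setting---that under the inclusion $\overline{Y}_{\calG} \hookrightarrow X_{\Sigma_{\calG}}$, the boundary divisor of $\overline{Y}_{\calG}$ associated to a building set element $G \in \calG$ is the scheme-theoretic pullback of the torus-invariant divisor spanned by the ray through $v_G$. This is visible from the matching between the iterated blow-up construction of $\overline{Y}_{\calG}$ and the stellar subdivision construction of $X_{\Sigma_{\calG}}$ described after Lemma \ref{lem:stellar}: each blow-up along the proper transform of $G$ is pulled back from a stellar subdivision at $v_G$, so its exceptional divisor restricts from the new torus-invariant divisor.

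Specializing to our setting, the construction of $\L^r_n$ by iterated blow-up in the proof of Theorem \ref{thm:Lrnwc} produces an exceptional divisor $E_{\tI}$ over (the proper transform of) $H_{\tI}$ for each $\Z_r$-decorated subset $\tI = (I, \a)$, and by the general principle above $E_{\tI}$ is the pullback of the torus-invariant divisor $X_{\tI}$ to $\L^r_n$. I would then verify that $E_{\tI}$ agrees with the boundary divisor $D_{\tI}$ of Section \ref{subsec:boundarystrata} by unwinding the moduli interpretation: a generic element of $E_{\tI}$ is an $(r,n)$-curve whose image under $b$ lies on $H_{\tI} = \bigcap_{i \in I} \widetilde{H}_i^{-\a(i)}$, i.e.\ for which $z_i^0$ sits at the point $\zeta^{-\a(i)}$ on the contracted single-component curve. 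Using $\sigma$-equivariance and the identification $y^k = \zeta^k$, this is equivalent to $z_i^{\a(i)}$ lying on the same component as $y^0$ for each $i \in I$, which is exactly the condition in Definition \ref{def:SI} defining $D_{\tI}$.

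Given the divisor identification $D_{\tI} = \L^r_n \cap X_{\tI}$, the higher-codimension case follows from intersection theory in $X_{\Sigma^r_n}$. For a chain $\tbI = (I_1, \ldots, I_\ell, \a)$, set $\tI_j := (I_j, \a|_{I_j})$; then the cone $\sigma_{\tbI}$ is generated by the rays through $v_{H_{\tI_j}}$, so $X_{\tbI} = X_{\tI_1} \cap \cdots \cap X_{\tI_\ell}$ in $X_{\Sigma^r_n}$. Pulling back yields $D_{\tI_1} \cap \cdots \cap D_{\tI_\ell}$, which is the irreducible codimension-$\ell$ boundary stratum of the wonderful compactification indexed by the $\calG$-nested set $\{H_{\tI_1}, \ldots, H_{\tI_\ell}\}$. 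Since $S_{\tbI}$ is also irreducible of codimension $\ell$ and is contained in each $D_{\tI_j}$ (by the refinement correspondence of \cite[Proposition 5.4]{CDHLR}), the two must coincide.

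The main obstacle is the moduli-theoretic matching of $E_{\tI}$ with $D_{\tI}$ in the second paragraph. Carrying this out rigorously requires tracing, through the Cartesian diagrams \eqref{eq:Cartesian} in the proof of Theorem \ref{thm:Lrnwc}, how the blow-up centers and exceptional divisors on the $\L^r_n$ side correspond under the ramified cover $q$ and the Hassett weight-reduction $c$ to the subvarieties $H_{\tI}$ on $(\P^1)^n$.
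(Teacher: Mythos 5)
Your plan is correct and follows essentially the same route as the paper: reduce to divisors, match the iterated blow-up of $(\P^1)^n$ from Theorem~\ref{thm:Lrnwc} with the toric iterated blow-up coming from Lemma~\ref{lem:stellar} so that exceptional divisors restrict from torus-invariant divisors, and then obtain higher-codimension strata as intersections of divisors. The step you flag as the main obstacle is exactly where the paper's proof does its work: it makes your ``general principle'' precise by applying the blow-up closure lemma stage by stage to the linear embedding $i\colon (\P^1)^n \hookrightarrow (\P^{r-1})^n$ (using $W_{\tI} = i^{-1}(\widehat{W}_{\tI})$), while treating the identification of $D_{\tI}$ with the proper transform of $E_{\tI}$ as immediate from the moduli description of the weight-reduction blow-ups, which is the part you proposed to verify by hand.
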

\begin{proof}
It suffices to prove the claim for divisors, since any torus-invariant stratum (respectively, boundary stratum) is the intersection of the torus-invariant divisors (respectively, boundary divisors) that contain it, and in both cases, the intersection of the stratum indexed by $\tbI$ and the stratum indexed by $\tbJ$ is the stratum indexed by the chain $\tbI \cap \tbJ$ described in Remark~\ref{rem:intersection}.  Thus, we fix a decorated set $\tI = (I, \a)$ and consider the corresponding boundary divisor $D_{\tI} \subseteq \L^r_n$.

From the last paragraph of the proof of Theorem~\ref{thm:Lrnwc}, one can view $\L^r_n$ as an iterated blow-up
\[\L^r_n = Y_n \longrightarrow Y_{n-1} \longrightarrow \cdots \longrightarrow Y_1 \longrightarrow Y_0 = (\P^1)^n,\]
where $Y_{k+1}$ is obtained from $Y_k$ by blow-up along the proper transform of the locus
\[W_k:= \bigcup_{\widetilde{J} \; \big| \; |J| =n-k} H_{\widetilde{J}} \subseteq (\P^1)^n.\]
If $E_{\tI} \subseteq Y_{n-|I|+1}$ denotes the exceptional divisor over $H_{\tI}$, then from this perspective, $D_{\tI}$ is the proper transform in $\L^r_n$ of $E_{\tI}$.

On the other hand, one can also view $X_{\Sigma^r_n}$ as an iterated blow-up, by the stellar subdivision perspective of Lemma~\ref{lem:stellar}.  Namely, let $\Sigma_r$ be the nested set fan for the arrangement \eqref{eq:Ar}, as described in Remark~\ref{rem:stellar}.  Then $\Sigma_r$ is obtained from the fan for $\P^{r-1}$ by removing all but the $1$-dimensional cones, so
\[X_{\Sigma_r} = \P^{r-1} \setminus \bigcup_{j \neq \ell} (\widehat{H}_j \cap \widehat{H}_\ell),\]
where $\widehat{H}_j \subseteq \P^{r-1}$ denotes the $j$th coordinate hyperplane; in other words, a point of $\P^{r-1}$ belongs to $X_{\Sigma^r_n}$ if and only if at most one of its coordinates is zero.  Thus,
\[X_{(\Sigma_r)^{\times n}} = \left(\P^{r-1} \setminus \bigcup_{j \neq \ell} (\widehat{H}_j \cap \widehat{H}_\ell)\right)^n,\]
and Lemma~\ref{lem:stellar} says that $X_{\Sigma^r_n}$ can be obtained from this variety by an iterated blow-up along the torus-invariant subvarieties $\widehat{H}_{\widetilde{J}}$ associated to the cones $\text{Cone}(v_{H_{\widetilde{J}}})$ for each nested set $\widetilde{J} = (J,\b)$.  Specifically, we have
\[\widehat{H}_{\widetilde{J}} := \bigcap_{i \in J} \widehat{H}_{i}^{-\b(i)},\]
where $\widehat{H}_i^j$ denotes the pullback of $\widehat{H}_j$ along the projection of $X_{(\Sigma_r)^{\times n}}$ to the $i$th factor.  Thus, we have a sequence of blow-ups
\[X_{\Sigma^r_n} = \widehat{Y}_n \longrightarrow \widehat{Y}_{n-1} \longrightarrow \cdots \longrightarrow \widehat{Y}_1 \longrightarrow \widehat{Y}_0 = X_{(\Sigma_r)^{\times n}},\]
where $\widehat{Y}_{k+1}$ is obtained from $
\widehat{Y}_k$ by blow-up along the proper transform of the locus
\[\widehat{W}_k:= \bigcup_{\widetilde{J} \; \big| \; |J| =n-k} \widehat{H}_{\widetilde{J}} \subseteq X_{(\Sigma_r)^{\times n}}.\]
This is exactly analogous to the situation for $\L^r_n$ described above, and also as in that situation, if $\widehat{E}_{\tI} \subseteq \widehat{Y}_{n-|I|+1}$ denotes the exceptional divisor over $\widehat{W}_{\tI}$, then the torus-invariant stratum $X_{\tI}$ is the proper transform of $\widehat{E}_{\tI}$ in $X_{\Sigma^r_n}$.

Now, let
\[i: Y_0 \hookrightarrow \widehat{Y}_0\]
be the linear inclusion of $(\P^1)^n$ into $(\P^{r-1})^n$ sending the $r$th root of unity $\zeta^j$ in each factor to the coordinate hyperplane $\widehat{H}_j$.  Then 
\[W_{\tI} = i^{-1}\left(\widehat{W}_{\tI}\right),\]
so the blow-up closure lemma shows that $Y_{n-|I|+1} \hookrightarrow \widehat{Y}_{n-|I|+1}$ in such a way that $E_{\tI}$ is the restriction of $\widehat{E}_{\tI}$.  Taking proper transforms, then, we see that $D_{\tI}$ is the restriction of $X_{\tI}$, as claimed.
\end{proof}

\begin{remark}
\label{rem:inclusionreversing}
One upshot of Proposition~\ref{prop:strata} is that there is an inclusion-{\it reversing} bijection between the cones of the fan $\Sigma^r_n$ and the boundary strata of $\L^r_n$.  This is analogous to the inclusion-{\it preserving} bijection between the faces of the polytopal complex $\Delta^r_n$ and the boundary strata of $\L^r_n$ that we proved in \cite{CDHLR}.  In Section~\ref{sec:tropical} below, we make the connection between $\Sigma^r_n$ and $\Delta^r_n$ precise.
\end{remark}

\subsection{Calculation of the Chow ring of \texorpdfstring{$\L^r_n$}{the moduli space}}

Equipped with the results of the previous subsections, the calculation of $A^*(\L^r_n)$ is essentially immediate.

\begin{theorem}
\label{thm:LrnChow}
Let $r \geq 2$ and $n \geq 0$.  The Chow ring of $\L^r_n$ is generated by the boundary divisors $D_{\tI}$ for each (nonempty) decorated subset $\tI$ of $[n]$, with relations given by
\begin{itemize}
    \item $D_{\tI} \cdot D_{\widetilde{J}} = 0$ unless either $\tI \leq \widetilde{J}$ or $\widetilde{J} \leq \tI$;
    \item for all $i \in [n]$ and all $a,b \in \Z_r$,
    \[\sum_{\substack{\tI \text{ s.t. }\\ i \in I, \; a(i) = a}} D_{\tI} = \sum_{\substack{\tI \text{ s.t. }\\ i \in I, \; a(i) = b}} D_{\tI}.\]
\end{itemize}
\end{theorem}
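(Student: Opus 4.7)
The plan is to combine Theorem~\ref{thm:Lrnwc}, Theorem~\ref{thm:wcproduct}, and Proposition~\ref{prop:strata} to reduce the computation to a standard toric-variety presentation. Explicitly, Theorems~\ref{thm:Lrnwc} and~\ref{thm:wcproduct} yield the Chow equivalence $A^*(\L^r_n) = A^*(X_{\Sigma^r_n})$, and Proposition~\ref{prop:strata} identifies the torus-invariant divisor $X_{\widetilde{I}}$ with the boundary divisor $D_{\widetilde{I}}$. It therefore suffices to present $A^*(X_{\Sigma^r_n})$ in terms of these divisors.

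The first step is to verify that $\Sigma^r_n$ is a smooth fan, so that Danilov's presentation
\[A^*(X_{\Sigma^r_n}) = \Z[D_{\widetilde{I}}]/(I_{\mathrm{SR}} + I_{\mathrm{lin}})\]
applies with integer coefficients. For a maximal cone $\sigma_{\tbI}$ with $\tbI = (I_1 \subsetneq \cdots \subsetneq I_\ell, \a)$, the successive differences
\[v_{\widetilde{I_k}} - v_{\widetilde{I_{k-1}}} = \sum_{i \in I_k \setminus I_{k-1}} e_i^{-\a(i)}\]
are supported on disjoint factors of $V_\calA$, and within each factor the relevant $e_i^{-\a(i)}$ is a primitive element of the lattice $\Z^r/\Z$. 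A linear-algebra check then shows that the rays $v_{\widetilde{I_k}}$ extend to a $\Z$-basis of the ambient lattice, confirming smoothness.

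Next, I would translate the two pieces of the Danilov ideal. For $I_{\mathrm{SR}}$, a collection of rays spans a cone of $\Sigma^r_n$ exactly when the corresponding decorated subsets form a chain in the usual poset ordering; since pairwise comparability of poset elements implies total ordering, every minimal non-face has cardinality two, and these are exactly the pairs of incomparable decorated subsets, yielding the first family of relations. For $I_{\mathrm{lin}}$, the character lattice of the torus is $\bigoplus_{i=1}^n M_i$ with $M_i \subseteq \Z^r$ the sublattice of tuples summing to zero, generated by the characters $\chi_i^{a,b} := e_i^{*,a} - e_i^{*,b}$ for $a,b \in \Z_r$. A direct pairing computation shows that $\langle \chi_i^{a,b}, v_{\widetilde{I}}\rangle$ equals $\delta_{-\a(i),a} - \delta_{-\a(i),b}$ when $i \in I$ and vanishes otherwise, so the corresponding linear relation reads
\[\sum_{\substack{\widetilde{I}\text{ s.t.}\\ i \in I,\ \a(i) = -a}} D_{\widetilde{I}} = \sum_{\substack{\widetilde{I}\text{ s.t.}\\ i \in I,\ \a(i) = -b}} D_{\widetilde{I}},\]
which, via the bijection $a \mapsto -a$ on $\Z_r$, matches the second family of relations in the theorem.

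The main obstacle I anticipate is the smoothness verification for $\Sigma^r_n$; all other parts reduce to standard translations between toric combinatorics and the data of chains and decorated subsets, via Proposition~\ref{prop:strata} and Definition~\ref{def:sigma}.
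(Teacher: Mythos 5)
Your proposal is correct and follows essentially the same route as the paper: combine Theorems~\ref{thm:Lrnwc} and~\ref{thm:wcproduct} with Proposition~\ref{prop:strata}, then read off the Stanley--Reisner and linear relations of the smooth toric variety $X_{\Sigma^r_n}$, translating non-faces into incomparable pairs of decorated sets and pairing with characters to get the second family of relations (the paper pairs with the dual basis to $\{e_i^j\}_{j\geq 1}$ rather than the differences $\chi_i^{a,b}$, which generate the same relations). Your explicit unimodularity check for $\Sigma^r_n$ is a reasonable addition that the paper leaves implicit in its appeal to standard toric machinery.
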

\begin{proof}
Theorem~\ref{thm:wcproduct} shows that $\L^r_n \hookrightarrow X_{\Sigma^r_n}$ is a Chow equivalence, and standard toric geometry machinery (see, for example, \cite{CLS}) calculates the Chow ring of $X_{\Sigma^r_n}$.  Namely, it is generated by the torus-invariant divisors, which correspond to the rays of $\Sigma^r_n$ and are thus of the form $X_{\tI}$ for each decorated set $\tI$.  The relations between these generators are given by
\begin{equation}
    \label{eq:relation1}
X_{\widetilde{I_1}} \cdots X_{\widetilde{I_k}} = 0 \; \text{ if } \; \text{Cone}\{\sigma_{\widetilde{I_1}}, \ldots, \sigma_{\widetilde{I_k}}\} \notin \Sigma^r_n
\end{equation}
and
\begin{equation}
    \label{eq:relation2}
\sum_{\tI} \langle v, u_{\tI} \rangle X_{\tI} = 0 \;\; \text{ for all } v \in (V_{\calA})^{\vee},
\end{equation}
where 
\[u_{\tI} = \sum_{i \in I} e_i^{-a(i)}\]
is the primitive integral generator of $\sigma_{\tI}$ and $\langle \cdot, \cdot \rangle$ is the natural pairing between $V_{\calA}$ and $(V_{\calA})^{\vee}$.

By the definition of $\Sigma^r_n$ and the result of Proposition~\ref{prop:strata}, the relation \eqref{eq:relation1} pulls back to
\[D_{\widetilde{I_1}} \cdots D_{\widetilde{I_k}} = 0 \; \text{ if } \; \{\tI_1, \ldots, \tI_k\} \text{ is not a chain},\]
which is equivalent to the first relation in the statement of the theorem.  In the relation \eqref{eq:relation2}, we can let $v$ range over the dual basis to the basis $\{e_i^j\}$ for $V_{\calA}$, where $i \in [n]$ and $j \in \{1,\ldots, r-1\}$; note that in this basis, we have
\[e_i^0 = -e_i^1 - \cdots - e_i^{r-1}\]
by the definition of $V_{\calA}$ as a quotient.  When $v$ is dual to $e_i^j$, the pullback of \eqref{eq:relation2} becomes
\[\sum_{\substack{\tI \text{ s.t. }\\ i \in I, \; a(i) = -j}}  D_{\tI} - \sum_{\substack{\tI \text{ s.t. }\\ i \in I, \; a(i) = 0}} D_{\tI} = 0.\]
Varying over all $v$ in the dual basis yields the second relation in the statement of the theorem, so the theorem is proved.
\end{proof}

\begin{remark}
\label{rem:r=1Chow}
Recalling from Remark~\ref{rem:r=1} that setting $r=1$ in the definition of $\L^r_n$ produces the space $\M^1_n$ considered in Section~\ref{sec:newhassett}, one might hope to generalize Theorem~\ref{thm:LrnChow} to $r=1$ by calculating the Chow ring of $\M^1_n$.  This can indeed be done: by the iterated blow-up perspective described in the proof of Theorem~\ref{thm:Lrnwc}, one can view $\M^1_n$ as the toric variety associated to a fan obtained by stellar subdivision from the fan for $(\P^1)^n$.  This fan is not the $r=1$ case of the nested set fan $\Sigma^r_n$, however, so the Chow ring of $\M^1_n$ does not arise as a special case of Theorem~\ref{thm:LrnChow}.
\end{remark}

\begin{remark}
\label{rem:exceptional-iso}
A further application of the presentation of $\L^r_n$ as a wonderful compactification, which we hope to take up in future work, is a computation of the $K$-ring of $\L^r_n$.  In particular, \cite{LLPP} gives an isomorphism between the integral $K$-ring and the Chow ring of wonderful compactifications of hyperplane arrangement complements in projective space.  If a similar result holds for wonderful compactifications of complements of product arrangements, then the computation of $A^*(\L^r_n)$ in Theorem~\ref{thm:LrnChow} will yield a computation of $K(\L^r_n)$. 
\end{remark}

\subsection{The Betti numbers of \texorpdfstring{$\L^r_n$}{the moduli space}}

While the computation of $A^*(\L^r_n)$ in the previous subsection relies critically on the Chow equivalence with $X_{\Sigma^r_n}$ provided by Theorem~\ref{thm:wcproduct}, one can compute $A^*(\L^r_n)$ as an additive group without passing through that theorem.  Indeed, in \cite{lili:chow}, Li Li gives a presentation of the Chow groups $A^*(\overline{Y}_{\calG})$ for any wonderful compactification $\overline{Y}_{\calG}$.  In the case of $\L^r_n$, that presentation is the following.

First, for any chain $\tbI = (I_1, \ldots, I_\ell, \a)$, set
\[j(\tbI) = (j_1(\tbI), \ldots, j_\ell(\tbI)):= (|I_1|, |I_2|-|I_1|, \ldots, |I_{\ell}|- |I_{\ell-1}|),\]
which we refer to as the {\bf jump type} of $\tbI$.  Then, define
\[M_{\tbI} := \{ \mu \in \Z^{\ell} \; | \; 1 \leq \mu_i < j_i(\tbI) \; \text{ for all } i\}.\]
Note that $M_{\tbI}$ depends only on the jump type of $\tbI$, and it is nonempty if and only if each entry in $j(\tbI)$ is at least two.  In light of this, for any vector $\bf{j}=(j_1, \ldots, j_\ell) \in (\Z_{\geq 2})^{\ell}$, let
\[M_{\bf{j}} := \{ \mu \in \Z^{\ell} \; | \; 1 \leq \mu_i < j_i \; \text{ for all } i\},\]
and let $N_{\bf j}$ be the number of chains of jump type $\bf{j}$; explicitly,
\[N_{\bf j} := \binom{n}{j_1, \ldots, j_\ell, n - |\mathbf{j}|} r^{|\bf{j}|},\]
where $|\bf{j}| := j_1 + \cdots + j_\ell$.\footnote{The published version of this manuscript contains a minor error where the last entry $n - |\mathbf{j}|$ was missing in the multinomial coefficient.} Then the presentation of the Chow groups $A^*(\L^r_n)$ is the following. 

\begin{theorem} For any $k \in \Z^{\geq 0}$, there is an isomorphism of additive groups
\[ A^k(\L^r_n) \cong A^k((\P^1)^n) \oplus \bigoplus_{\substack{\ell \geq 1 \\ \bf{j} \in (\Z_{\geq 2})^{\ell}}} \left(  \bigoplus_{\mu \in M_{\bf{j}}} A^{k-|\mu|}\left((\P^1)^{n-|\bf{j}|}\right) \right)^{\oplus N_{\bf{j}}}.
\]
\end{theorem}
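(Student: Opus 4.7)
The plan is to invoke Li Li's additive presentation of the Chow groups of a wonderful compactification from \cite{lili:chow}, and then match each summand of the resulting decomposition against the combinatorics of $\L^r_n$ established in Sections~\ref{sec:Lrnbackground} and~\ref{subsec:boundarystrata}.  Li Li's theorem gives, for any wonderful compactification $\overline{Y}_\calG$ of an arrangement in a smooth variety $Y$, an additive isomorphism
\[
A^k(\overline{Y}_\calG) \;\cong\; A^k(Y) \;\oplus\; \bigoplus_{T} \;\bigoplus_{\mu} A^{k-|\mu|}(G_T),
\]
where $T = \{G_1 \supsetneq \cdots \supsetneq G_\ell\}$ ranges over nonempty $\calG$-nested sets, $G_T := G_\ell$ denotes the deepest element of $T$, and $\mu = (\mu_1, \ldots, \mu_\ell)\in \Z^\ell$ satisfies $1 \leq \mu_i < \codim_{G_{i-1}}(G_i)$ for each $i$ (with the convention $G_0 := Y$).

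Applying this to the wonderful-compactification description of $\L^r_n$ provided by Theorem~\ref{thm:Lrnwc}, we have $Y = (\P^1)^n$ and the building set is maximal, so $\calG$-nested sets coincide with chains in $\calL_\calA \setminus \{\hat 0\}$, which are in bijection with $\Z_r$-decorated chains $\tbI = (I_1, \ldots, I_\ell, \a)$.  For such a chain, the defining equations $p_i = \zeta^{-\a(i)}$ of $H_{\widetilde{I_\ell}}$ (see \eqref{eq:Hij}) identify the deepest intersection with $(\P^1)^{n-|I_\ell|} = (\P^1)^{n-|\bf{j}|}$, while the inclusion $H_{\widetilde{I_i}} \subsetneq H_{\widetilde{I_{i-1}}}$ has codimension $|I_i|-|I_{i-1}| = j_i(\tbI)$.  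Li Li's admissibility bound on $\mu$ therefore reduces to $1 \leq \mu_i < j_i(\tbI)$, which is precisely the defining condition for the set $M_{\tbI}$; in particular, the contribution of $\tbI$ to $A^k(\L^r_n)$ depends only on its jump type $\bf{j}$.

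It then remains to group chains by jump type.  A chain of jump type $\bf{j} = (j_1,\ldots,j_\ell)$ is specified by an ordered decomposition of a size-$|\bf{j}|$ subset of $[n]$ into blocks of sizes $j_1, \ldots, j_\ell$, together with a function $\a \colon I_\ell \to \Z_r$, yielding exactly $\binom{n}{j_1,\ldots,j_\ell} r^{|\bf{j}|} = N_{\bf{j}}$ chains.  Since $M_{\tbI}$ is empty whenever some $j_i$ equals $1$, only jump types $\bf{j} \in (\Z_{\geq 2})^\ell$ contribute nontrivially, and the empty chain accounts for the initial summand $A^k((\P^1)^n)$; summing everything up yields the stated decomposition.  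The one step that requires genuine verification (as opposed to bookkeeping) is that the hypotheses of \cite{lili:chow} apply in our setting—namely, that $\calA$ is an arrangement of smooth, pairwise clean subvarieties of $(\P^1)^n$ and that each $H_{\widetilde{I}}$ is a smooth irreducible subvariety isomorphic to $(\P^1)^{n-|I|}$.  Both facts are immediate from the observation that the distinct $\widetilde{H}_i^j$ impose independent coordinate conditions on distinct $\P^1$ factors, which makes this check the only mildly delicate step in the argument.
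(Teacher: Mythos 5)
Your proposal is correct and follows essentially the same route as the paper: a direct application of Li Li's additive decomposition for wonderful compactifications, with $\calG$-nested sets translated into $\Z_r$-decorated chains via Theorem~\ref{thm:Lrnwc}, the deepest element of a chain identified with $(\P^1)^{n-|\mathbf{j}|}$, the admissibility bounds on $\mu$ matching $M_{\tbI}$ through the relative codimensions $j_i$, and the final count grouping chains by jump type to produce $N_{\mathbf{j}}$. The only difference is that you spell out the bookkeeping (the chain count, the vanishing when some $j_i = 1$, and the cleanness of the arrangement) that the paper leaves implicit.
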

\begin{proof}
This is a direct application of \cite[Theorem 3.1]{lili:chow}.  The sum over $\calG$-nested sets $\mathcal{T}$ in that theorem becomes a sum over chains $\tbI$, and (after correcting the typo that $\{\mu_G\}_{G \in \calG}$ should be $\{\mu_G\}_{G \in \mathcal{T}}$ in \cite[page 9]{lili:chow}) the set $M_{\mathcal{T}}$ becomes the set $M_{\tbI}$.  The space $Y_0\mathcal{T}$ in that theorem is the minimal subvariety (under inclusion) in the chain $\mathcal{T}$, which in our case is
\[H_{\tbI} := \sum_{i \in I_{\ell}} H_i^{\a(i)} \cong (\P^1)^{n-|I_\ell|}.\]
Since $|I_\ell| = |\bf{j}|$ for any chain $\tbI$ of jump type $\bf{j}$, the above isomorphism follows.
\end{proof}

\begin{example}
    Using the theorem above, we compute the following table of Betti numbers of $\L^{r}_n$ for small $r$ and $n$ using SageMath.\footnote{SageMath code available at \url{https://github.com/shiyue-li/multimatroids/blob/main/r-Eulerian.sage}.} Note that the Betti numbers for $r = 2$ are precisely the type-$B$ Eulerian numbers,\footnote{OEIS A060187: \url{https://oeis.org/A060187}.} which were studied as the Betti numbers of the type-$B$ permutohedral variety $X_{B_n} = \L^2_n$ in \cite{eur2022signed}. 
    \begin{center}
    \begin{tabular}{ |c|c| } 
    \hline
    $(r, n)$ & $\dim A^{i} (\L^{r}_n ), i = 0, \ldots, n$ \\ \hline
    $(2, 3)$ & $1, 23, 23, 1$ \\ \hline
    $(2, 4)$ & $1, 76, 230, 76, 1$ \\ \hline
    $(2, 5)$ & $1, 237, 1682, 1682, 237, 1$ \\ \hline
    $(2, 6)$ & $1, 722, 10543, 23548, 10543, 722, 1$ \\ \hline
    $(3, 4)$ & $1, 247, 897, 247, 1$ \\ \hline
    $(3, 5)$ & $1, 1013, 9433, 9433, 1013, 1$ \\ \hline
    $(3, 6)$ & $1, 4083, 82905, 202115, 82905, 4083, 1$ \\ \hline
    $(3, 7)$ & $1, 16369, 663897, 3268709, 3268709, 663897, 16369, 1$ \\ \hline
    $(4, 5)$ & $1, 3109, 34154, 34154, 3109, 1$ \\ \hline
    $(4, 6)$ & $1, 15606, 384719, 988084, 384719, 15606, 1$ \\ \hline
    $(4, 7)$ & $1, 78103, 3939429, 21024707, 21024707, 3939429, 78103, 1$ \\ \hline
    \end{tabular}
    \end{center}
\end{example}

This table supports the following conjecture, the $r=2$ case of which follows from \cite{eur2022signed}. \footnote{Previous versions, including the published version, contained a minor error in the Betti numbers due to the error mentioned in the previous page; this table has been corrected.}

\begin{conjecture}
    For each $r$ and $n$, the Betti numbers $\dim A^{i}(\L^r_n)$ form a log-concave sequence.
\end{conjecture}

\section{Connection to tropical curves with cyclic action}
\label{sec:tropical}

We have now seen that the nested set fan $\Sigma^r_n$ given by Definition~\ref{def:sigma} yields a toric variety whose Chow ring is isomorphic to $A^*(\L^r_n)$.  This fan has another interpretation, however: its support can be identified with the moduli space of ``tropical $(r,n)$-curves,'' and under this identification, the subdivision of $|\Sigma^r_n|$ into cones coincides with the stratification of the tropical moduli space by analogues of boundary strata.  The goal of this section is to prove these assertions.  As a consequence, we also find a new interpretation of the polytopal complex $\Delta^r_n$ introduced in \cite{CDHLR}.

\subsection{The fan \texorpdfstring{$\Sigma^r_n$}{Sigmarn} as the tropical moduli space}

Recall that the dual graph of an element of $\L^r_n$ is a combinatorial graph with a vertex for each irreducible component of the underlying curve, an edge for each node, and a half-edge for each marked point (see \cite[Definition 2.9]{CDHLR}).  If $\Gamma$ is the dual graph of an element $(C; \sigma; x^\pm, \{y^\ell\}, \{z_i^j\})$ of $\L^r_n$, then $\sigma$ induces a unique automorphism  $\sigma_{\Gamma}$ of $\Gamma$.  Given this, tropical $(r,n)$-curves are defined as follows.

\begin{definition}
Let $n \geq 0$ and $r \geq 2$.  A {\bf tropical $(r,n)$-curve} is a triple $(\Gamma,\sigma_\Gamma, L)$, where $\Gamma$ is the dual graph of an element $(C; \sigma; x^\pm, \{y^\ell\}, \{z_i^j\})$ in $\L^r_n$, $\sigma_\Gamma$ is the unique automorphism  on the graph $\Gamma$ determined by $\sigma$, and
\[L: E(\Gamma) \rightarrow \R^+\]
is a ``length'' function on the edges of $\Gamma$ such that
\[L(e) = L(\sigma_{\Gamma}(e))\]
for all $e \in E(\Gamma)$.
\end{definition}

We denote by $\Lrntrop$ the set of all tropical $(r,n)$-curves.  Our goal, now, is to identify $\Lrntrop$ with $|\Sigma^r_n|$.  In particular, the cones of $\Sigma^r_n$ will be identified with subsets of $\Lrntrop$, and in order to do so, we recall from Remark~\ref{rem:inclusionreversing} that the cones of $\Sigma^r_n$ are in inclusion-reversing bijection with the boundary strata $S_{\tbI}$ of $\L^r_n$.  Thus, for any chain $\tbI$, we define $T_{\tbI} \subseteq \Lrntrop$ as the subset consisting of tropical curves $(\Gamma, L)$ where the boundary stratum with dual graph $\Gamma$ contains $S_{\tbI}$.  More explicitly, if $\Gamma_{\tbI}$ denotes the dual graph of a curve of type $\tbI$ (as in Definition~\ref{def:SI}), we have
\[T_{\tbI} := \{ (\Gamma, L) \in \Lrntrop \; | \; \Gamma \text{ is obtained from } \Gamma_{\tbI} \text{ by contracting edges}\}.\]
Given this definition, we can state the correspondence between $\Lrntrop$ and $|\Sigma^r_n|$ as follows.

\begin{proposition}
\label{prop:tropical}
There is a natural bijection between $\Lrntrop$ and $|\Sigma^r_n|$, under which the subset $T_{\tbI}$ corresponds to the cone $\sigma_{\tbI}$.
\end{proposition}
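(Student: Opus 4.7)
The plan is to construct an explicit bijection $\phi \colon \Lrntrop \to |\Sigma^r_n|$ by assigning to each tropical curve the point of the appropriate cone obtained by weighting the cone's generators by the corresponding edge lengths. First I would analyze the structure of $\Gamma_{\tbI}$ for a chain $\tbI=(I_1,\dots,I_\ell,\a)$: it consists of $r$ length-$\ell$ spokes attached to a central vertex, so it has $r\ell$ edges partitioned into $\ell$ orbits under $\sigma_\Gamma$, one per ``level'' $j\in\{1,\dots,\ell\}$ measuring distance from the tip of a spoke. Because any admissible length function on $\Gamma_{\tbI}$ must be $\sigma_\Gamma$-invariant, specifying $L$ amounts to giving a positive real $t_j$ for each level $j$.

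Next I would understand edge contractions. A direct inspection of the distribution of light marked points recorded in Definition~\ref{def:SI} shows that contracting the level-$j$ orbit of $\Gamma_{\tbI}$ produces the graph $\Gamma_{\tbI^{(j)}}$, where $\tbI^{(j)}$ is the chain obtained from $\tbI$ by deleting $I_j$ (and restricting $\a$ accordingly when $j=\ell$). Iterating, an element of $T_{\tbI}$ is equivalently a choice of a subset $J\subseteq\{1,\dots,\ell\}$ of levels to contract together with positive lengths $t_j$ for each $j\notin J$; the underlying graph is then $\Gamma_{\tbI'}$ for the subchain $\tbI'=(I_j)_{j\notin J}$. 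With this picture, define
\[
\phi_{\tbI}\big((\Gamma,L)\big) \, := \, \sum_{j\notin J} t_j \, v_j, \qquad v_j \, := \, \sum_{i\in I_j} e_i^{-\a(i)},
\]
with the smooth curve (the $J=\{1,\dots,\ell\}$ case) sent to the origin.

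To see that $\phi_{\tbI}$ is a bijection onto $\sigma_{\tbI}$, I would verify that $v_1,\dots,v_\ell$ are linearly independent: because $I_1\subsetneq\cdots\subsetneq I_\ell$, each difference $v_j-v_{j-1}$ introduces basis directions $e_i^{-\a(i)}$ in copies of $\R^r/\R$ in $V_\calA$ not spanned by $v_1,\dots,v_{j-1}$. Thus $\sigma_{\tbI}$ is simplicial with the $v_j$ as its rays, and $\phi_{\tbI}$ carries the relative interior of the face $\text{Cone}\{v_j : j\notin J\}$ bijectively onto the locus of curves in $T_{\tbI}$ that contract exactly the orbits in $J$.

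Finally, I would check compatibility and glue: whenever $\tbI'$ is a subchain of $\tbI$, $\sigma_{\tbI'}$ is a face of $\sigma_{\tbI}$, $T_{\tbI'}\subseteq T_{\tbI}$, and the restriction of $\phi_{\tbI}$ to $T_{\tbI'}$ agrees with $\phi_{\tbI'}$ under the face inclusion, since both formulas involve only the levels surviving in $\tbI'$. Every tropical $(r,n)$-curve arises as $(\Gamma_{\tbJ},L)$ for a unique chain $\tbJ$---namely the one whose boundary stratum in $\L^r_n$ has the same dual graph---so the local maps glue into a well-defined global bijection $\phi\colon\Lrntrop\to|\Sigma^r_n|$ with $\phi(T_{\tbI})=\sigma_{\tbI}$. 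The main obstacle I anticipate is the combinatorial bookkeeping in the edge-contraction/chain-deletion correspondence, where one must carefully track how marked points redistribute when an orbit of edges collapses; this is the step that depends on the sign convention ``$-\a(i)$'' highlighted in Remark~\ref{rem:negative}.
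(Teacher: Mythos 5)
Your construction is correct and yields exactly the same identification as the paper's: weighting the ray generators $v_j=\sum_{i\in I_j}e_i^{-\a(i)}$ by the level-$j$ edge lengths produces precisely the paper's point $\sum_i L_i e_i^{\ell_i}$, where $L_i$ is the total edge length from the central vertex to the vertex carrying $z_i^0$ and $\ell_i=-\a(i)$. The only difference is organizational---the paper defines this map globally via the distances $L_i$, computes $|\Sigma^r_n|$ as a union of orthants, and then matches $T_{\tbI}$ with $\sigma_{\tbI}$ by reparametrizing the cone, whereas you build the same map cone-by-cone (via simpliciality of $\sigma_{\tbI}$ and the contraction/subchain correspondence) and glue---so the approach is essentially the same.
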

\begin{proof}
Recall from Remark~\ref{rem:stellar} that $\Sigma_r^n$ is obtained by stellar subdivision from the fan $\Sigma_r^{\times n}$, where $\Sigma_r$ is the $1$-dimensional fan in $\R^r/\R$ with $r$ rays, one spanned by the image of each of the standard basis vectors in $\R^r$.  Thus, one has
\begin{equation}
    \label{eq:|Sigma|}
|\Sigma^r_n| =|\Sigma_r|^{\times n}  = \{x_1 e_1^{a_1} + \cdots + x_ne_n^{a_n} \; | \; a_i \in \Z_r, \; x_i \in \R^{\geq 0} \; \text{ for all } i\} \subseteq (\R^r/\R)^{\oplus n}.
\end{equation}

In order to identify $\Lrntrop$ with this set, we associate to each $(\Gamma,L) \in \Lrntrop$ a point in $(\R^r/\R)^{\oplus n}$.  Specifically, let $L_i$ denote the total length of the edges of $\Gamma$ in a path from the central vertex to the vertex containing $z^0_i$, and assuming $L_i \neq 0$, define $\ell_i \in \Z_r$ by the condition that $z^0_i$ is on the same spoke as $y^{\ell_i}$. Then we identify $(\Gamma, L) \in \Lrntrop$ with the point
\[\sum_{i \; | \; L_i \neq 0} L_i e_i^{\ell_i} \in (\R^r/\R)^{\oplus n}.
\]
Given that $L_i$ varies over all nonnegative real numbers and $\ell_i$ varies over all elements of $\Z_r$, the image of $\Lrntrop$ under this identification is precisely the set \eqref{eq:|Sigma|}.

To understand the image of $T_{\tbI}$ under this identification, recall that in the dual graph $\Gamma_{\tbI}$ of a generic element of $S_{\tbI}$, the marked points $z_i^j$ with $i \in I_1$ are on the outermost vertices, so in the image of a tropical curve $(\Gamma_{\tbI}, L)$, the $L_i$ with $i \in I_1$ are equal and largest among all $L_i$.  Similarly, the marked points with $z_i^j$ with $i \in I_2 \setminus I_1$ are on the next-to-outermost vertices, so the $L_i$ with $i \in I_2 \setminus I_1$ are equal and next-largest.  This continues until the marked points $z_i^j$ with $i \in [n] \setminus I_\ell$, which are on the central vertex, so $L_i=0$ for $i \in [n] \setminus I_\ell$.  It follows that the set $T_{\tbI} \subseteq \Lrntrop$ corresponds under the above identification to the set of points
\[L_1 e_1^{-a(1)} + \cdots + L_n e_n^{-a(n)} \in (\R^r/\R)^{\oplus n}\]
for which $L_1, \ldots, L_n \in \R^{\geq 0}$ satisfy the following conditions:
\begin{itemize}
    \item if $i,i' \in I_j \setminus I_{j-1}$ for some $j$, then $L_i = L_{i'}$;
    \item if $i_1 \in I_1, i_2 \in I_2, \ldots, i_\ell \in I_\ell$, then
    \[L_{i_1} \geq L_{i_2} \geq \cdots \geq L_{i_\ell};\]
    \item if $i \in [n] \setminus I_\ell$, then $L_i = 0$.
\end{itemize}
To see that this set coincides with $\sigma_{\tbI}$, recall from Definition~\ref{def:sigma} that \begin{align*}
    \sigma_{\tbI} &:= \text{Cone}\left\{ \sum_{i \in I_1} e_i^{-a(i)}, \ldots, \sum_{i \in I_\ell} e_i^{-a(i)}\right\}\\
&=\left\{\left. c_1\sum_{i \in I_1}e_i^{-a(i)} + \cdots + c_\ell \sum_{i \in I_\ell} e_i^{-a(i)} \; \right| \; c_1, \ldots, c_\ell \in \R^{\geq 0}\right\}.
\end{align*}
Collecting the terms in a different way and using that $I_1 \subseteq I_2 \subseteq \cdots \subseteq I_\ell$, an arbitrary point in $\sigma_{\tbI}$ can be expressed as
\[\sum_{i \in I_1} (c_1 + \cdots + c_\ell)e_i^{-a(i)} + \sum_{i \in I_2 \setminus I_1} (c_2 + \cdots + c_\ell)e_i^{-a(i)} +\cdots + \sum_{i \in I_\ell \setminus I_{\ell-1}} c_\ell \; e_i^{-a(i)}\]
for $c_1, \ldots, c_\ell \in \R^{\geq 0}$.  Thus, the coefficient on $e_i^{-a(i)}$ for any $i \in I_1$ is the same, and these are the largest coefficients; the coefficients on $e_i^{-a(i)}$ for any $i \in I_2 \setminus I_1$ are the same, and these are the next-largest coefficients; and so on.  This is precisely the set of points satisfying the conditions mentioned above, so the identification of $\sigma_{\tbI}$ with $T_{\tbI}$ is complete.
\end{proof}

\begin{remark}
Aside from the definition of $\Lrntrop$ given above, there is another sense in which one might ``tropicalize'' the moduli space $\L^r_n$.  Namely, one can embed $\mathcal{L}^r_n \hookrightarrow \mathbb{T}^r$ as a closed subvariety (as in Remark~\ref{rem:linear}), and as such there is an associated geometric tropicalization $\text{Trop}(\mathcal{L}^r_n)$ in the sense of \cite{HKT}.  To see that these two notions of the tropical moduli space coincide, recall from the proof of Theorem~\ref{thm:wcproduct} that $\L^r_n \subseteq X_{\Sigma^r_n}$ is a tropical compactification, meaning in particular that
\[|\Sigma^r_n| = \text{Trop}(\mathcal{L}^r_n).\]
Combining this with Proposition~\ref{prop:tropical} gives an identification
\[\Lrntrop = \text{Trop}(\mathcal{L}^r_n).\]
\end{remark}

\subsection{The polytopal complex \texorpdfstring{$\Delta^r_n$}{Deltarn} as a normal complex of \texorpdfstring{$\Sigma^r_n$}{Sigmarn}}

The results of the previous subsection generalize the situation for Losev--Manin space $\L_n$, which---as explained in \cite{CDHLR}---is ``morally'' the $r=1$ case of the spaces $\L^r_n$.  In particular, Losev and Manin showed in \cite{LM} that $\L_n$ is a toric variety whose associated fan can be identified with the tropical moduli space $L_n^{\text{trop}}$.  Because this is a complete fan, though, one can also view the connection in terms of polytopes: namely, the normal polytope to $L_n^{\text{trop}}$ is the polytope of $\L_n$ as a toric variety, meaning that its faces are identified with the torus-invariant strata.  In fact, this normal polytope is the $(n-1)$-dimensional permutohedron $\Pi_n$, and the torus-invariant strata are precisely the boundary strata, so one obtains an identification between the faces of $\Pi_n$ and the boundary strata in $\L_n$.

In the case of $\L^r_n$, the moduli space itself is not toric but sits inside of (and is Chow-equivalent to) the toric variety $X_{\Sigma^r_n}$ whose fan we have now identified with $\Lrntrop$.  However, $\Sigma^r_n$ is not a complete fan in $(\R^r/\R)^{\oplus n}$ for $r >2$, so the usual construction of the normal polytope does not apply; it produces a polytope, but one of larger dimension than $|\Sigma^r_n|$.  There is a substitute for the normal polytope for non-complete fans, though, which is the ``normal complex'' introduced by Nathanson--Ross \cite{NR}.  This is a polytopal complex that one can view as the result of truncating $\Sigma^r_n$ by normal hyperplanes.  To complete the analogy to Losev--Manin space, then, one would hope to identify the faces of this normal complex---for an appropriate interpretation of ``faces'' of a polytopal complex---with the boundary strata in $\L^r_n$. 

In our previous work \cite{CDHLR}, we have already identified the boundary strata in $\L^r_n$ with the ``$\Delta$-faces'' of another polytopal complex $\Delta^r_n$.  This polytopal complex was constructed as a subset of
\[(\R^{\geq 0} \cdot \mu_r)^n \subseteq \C^n,\]
where $\mu_r$ denotes the set of $r$th roots of unity.  However, we can identify
\[(\R^{\geq 0} \cdot \mu_r)^n \leftrightarrow |\Sigma^r_n|\]
by identifying
\[\left(x_1 \zeta^{a_1},  \ldots, x_n \zeta^{a_n}\right) \leftrightarrow x_1 e_1^{a_1} + \cdots + x_n e_n^{a_n},\]
and using this, we can view $\Delta^r_n$ as a subset of $(\R^r/\R)^{\oplus n}$.  Explicitly,
\begin{equation}
    \label{eq:Deltarn}
\Delta^r_n := \bigcup_{a_1, \ldots, a_n \in \Z_r} \left\{ x_1e_1^{a_1} + \cdots + x_n e_n^{a_n} \; \left| \; x_i \in \R^{\geq 0} \; \text{ for all } i, \; \sum_{i \in I} x_i \leq \delta^n_{|I|} \; \text{ for all } I \subseteq [n]\right. \right\},
\end{equation}
where
\[\delta^n_k:= n+(n-1) + (n-2) + \cdots + (n-k+1).\]

We claim that this complex $\Delta^r_n$ is the normal complex of $\Sigma^r_n$.  (In the case $r=n=2$, the fan $\Sigma^2_2$ is the complete fan shown in Figure~\ref{fig:stellar}, whose normal complex is in fact a normal polytope: the octagon, which is the signed permutohedron when $n=2$ and equals $\Delta^2_2$.  In the case $r=3$ and $n=2$, we illustrate the claim in Figure~\ref{fig:stellar32}.)

\begin{figure}[ht]
    \centering
    \definecolor{ao(english)}{rgb}{0.0, 0.5, 0.0}

\begin{tikzpicture}[xscale=0.23, yscale=0.23]
\draw [gray,fill=gray,opacity=0.3] (0,0) -- (-8/2,-7/2) -- (-8/2+7*0.7,-7/2-5*0.7) -- (7*0.7,-5*0.7) -- cycle;
\draw [gray,fill=gray,opacity=0.3] (0,0) -- (7,0) -- (7+7*0.7,-5*0.7) -- (7*0.7,-5*0.7) -- cycle;
\draw [gray,fill=gray,opacity=0.3] (0,0) -- (0,7) -- (7*0.7,7-5*0.7) -- (7*0.7,-5*0.7) -- cycle;
\filldraw (0,0) circle[radius=1.5mm];
\draw [very thick,->] (0,0) --(0,8);
\draw [very thick,->] (0,0) --(-8*0.8,-7*0.8);
\draw [very thick,->] (0,0) --(10,0);
\draw [very thick,->] (0,0) --(7,-5);
\node at (1, -10) {$e_1^0 \times \Sigma_3$};
\node [above, scale=0.7] at (0,8) {$0 \times e^1_2$};
\node [left,scale=0.7] at (-8*0.8,-7*0.8) {$0\times e^2_2$};
\node [right,scale=0.7] at (10,0) {$0 \times e^0_2$};
\node [below right,scale=0.7] at (7,-5) {$e_1^0 \times 0$};
\node [scale=0.7] at (0,-3) {$e_1^0\times e^2_2$};
\node [scale=0.7] at (2.5,3) {$e_1^0 \times e^1_2$};
\node [scale=0.7] at (7,-2) {$e_1^0 \times e^0_2$};
\begin{scope}[shift={(21,0)}]
\draw [very thick,->] (0,0) --(0,8);
\draw [very thick,->] (0,0) --(-8*0.8,-7*0.8);
\draw [very thick,->] (0,0) --(10,0);
\draw [very thick,->] (0,0) --(7,-5);
\draw [blue, very thick,->] (0,0) --(7+7*0.7,-5*0.7);

\draw [fill=blue!60!cyan,opacity=0.3] (0,0) -- (7*0.7*0.8,-5*0.7*0.8) -- (7*0.8+7*0.7*0.8,-5*0.7*0.8) -- cycle;
\draw [fill=blue,opacity=0.3] (0,0) -- (7*0.8,0) -- (7*0.8+7*0.7*0.8,-5*0.7*0.8) -- cycle;

\draw [ao(english),fill=ao(english),opacity=0.3] (0,0) -- (-8/2*0.8,-7/2*0.8) -- (-8/2*0.8+7*0.7*0.8,-7/2*0.8-5*0.7*0.8) -- cycle;
\draw [ao(english),fill=ao(english)!60!green,opacity=0.3] (0,0) -- (-8/2*0.8+7*0.7*0.8,-7/2*0.8-5*0.7*0.8) -- (7*0.7*0.8,-5*0.7*0.8) -- cycle;

\draw [magenta,fill=magenta,opacity=0.3] (0,0) -- (0,7*0.8) -- (7*0.7*0.8,7*0.8-5*0.7*0.8)  -- cycle;
\draw [magenta,fill=magenta!60!pink,opacity=0.3] (0,0) -- (7*0.7*0.8,-5*0.7*0.8) -- (7*0.7*0.8,7*0.8-5*0.7*0.8) -- cycle;

\filldraw (0,0) circle[radius=1mm];

\draw[ao(english), very thick,->]  (0,0) --(-8/2+7*0.7,-7/2-5*0.7);
\draw [magenta, very thick,->]  (0,0) --(7*0.7,7-5*0.7);
\filldraw (0,0) circle[radius=1.5mm];

\node at (1, -10) {portion of $\Sigma^3_2$};
\end{scope}
\begin{scope}[shift={(42,0)}]
\draw [->] (0,0) --(0,8);
\draw [->] (0,0) --(-8*0.8,-7*0.8);
\draw [->] (0,0) --(10,0);
\draw [->] (0,0) --(7,-5);
\draw [blue, ->] (0,0) --(7*0.8+7*0.56,-5*0.56);
\draw[ao(english), ->]  (0,0) --(-8/2*0.8+7*0.56,-7/2*0.8-5*0.56);
\draw [magenta, ->]  (0,0) --(7*0.7,7-5*0.7);

\draw [yellow,fill=yellow,opacity=0.3] (0,0) -- (0,7*0.8) -- (7*0.37*0.8,7*0.8-5*0.37*0.8) -- (7*0.7*0.8,7*0.8-5*1.2*0.8) -- (7*0.7*0.8,-5*0.7*0.8) ;

\draw [yellow,fill=yellow,opacity=0.3] (0,0) -- (-8/2*0.8,-7/2*0.8)-- (-8/2*0.8+7*0.37*0.8,-7/2*0.8-5*0.37*0.8)--  (-8/2*0.8*0.5+7*0.7*0.8,-7/2*0.8*0.5-5*0.7*0.8) -- (7*0.7*0.8,-5*0.7*0.8);

\draw [yellow,fill=yellow,opacity=0.3] (0,0) -- (7*0.8,0) -- (7*0.8+7*0.37*0.8,-5*0.37*0.8)--  (7*0.8+7*0.27*0.8,-5*0.7*0.8) -- (7*0.7*0.8,-5*0.7*0.8);

\draw [very thick] (0,7*0.8) -- (7*0.37*0.8,7*0.8-5*0.37*0.8) -- (7*0.7*0.8,7*0.8-5*1.2*0.8) -- (7*0.7*0.8,-5*0.7*0.8) ;

\draw [very thick] (-8/2*0.8,-7/2*0.8)-- (-8/2*0.8+7*0.37*0.8,-7/2*0.8-5*0.37*0.8)--  (-8/2*0.8*0.5+7*0.7*0.8,-7/2*0.8*0.5-5*0.7*0.8) -- (7*0.7*0.8,-5*0.7*0.8);

\draw [very thick] (7*0.8,0) -- (7*0.8+7*0.37*0.8,-5*0.37*0.8)--  (7*0.8+7*0.27*0.8,-5*0.7*0.8) -- (7*0.7*0.8,-5*0.7*0.8);

\draw [very thick, magenta] (7*0.37*0.8,7*0.8-5*0.37*0.8) -- (7*0.7*0.8,7*0.8-5*1.2*0.8);
\draw [very thick, ao(english)] (-8/2*0.8+7*0.37*0.8,-7/2*0.8-5*0.37*0.8)--  (-8/2*0.8*0.5+7*0.7*0.8,-7/2*0.8*0.5-5*0.7*0.8);
\draw [very thick, blue](7*0.8+7*0.37*0.8,-5*0.37*0.8)--  (7*0.8+7*0.27*0.8,-5*0.7*0.8);

\filldraw [thin, blue, fill= blue!50!white] (7*0.8+7*0.37*0.8,-5*0.37*0.8) circle[radius=1.5mm];
\filldraw [thin, blue, fill=blue!50!cyan] (7*0.8+7*0.27*0.8,-5*0.7*0.8) circle[radius=1.5mm];
\filldraw [thin, ao(english),fill=ao(english)!50!white] (-8/2*0.8+7*0.37*0.8,-7/2*0.8-5*0.37*0.8) circle[radius=1.5mm];
\filldraw [thin, ao(english),fill=ao(english)!50!green](-8/2*0.8*0.5+7*0.7*0.8,-7/2*0.8*0.5-5*0.7*0.8) circle[radius=1.5mm];
\filldraw [thin, magenta,fill=magenta!50!white](7*0.37*0.8,7*0.8-5*0.37*0.8) circle[radius=1.5mm];
\filldraw [thin, magenta,fill=magenta!50!pink] (7*0.7*0.8,7*0.8-5*1.2*0.8) circle[radius=1.5mm];
\node at (1, -10) {portion of $\Delta^3_2$};

\end{scope}
\end{tikzpicture}
    \caption{The fan $\Sigma_2^3$ is obtained as stellar subdivision of $\Sigma_3 \times \Sigma_3$, so we can obtain a portion of it by stellar subdivision of $e_1^0 \times \Sigma_3$, as shown in the middle figure.  Taking the dual complex to this fan, we recover a portion of the complex $\Delta^3_2$ illustrated in \cite[Figure 2]{CDHLR}.}
    \label{fig:stellar32}
\end{figure}
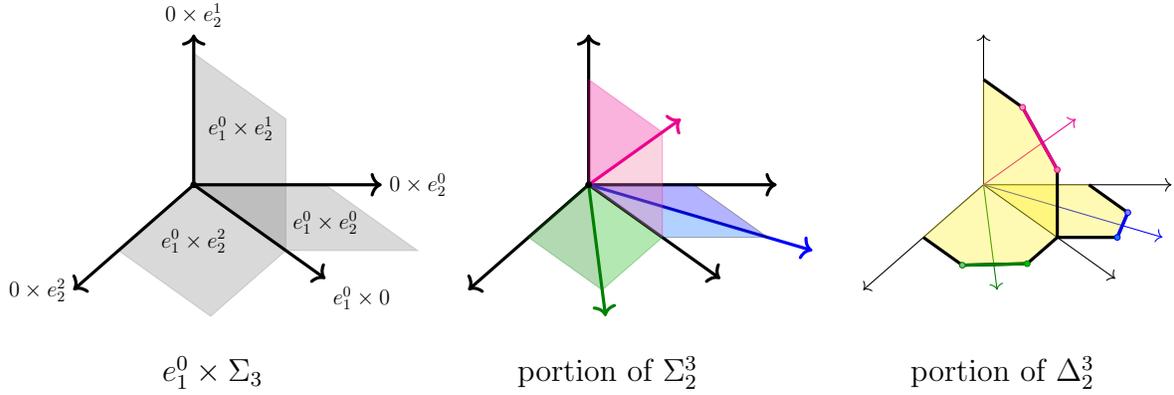

More precisely, normal complexes of fans depend on three choices: an inner product on the ambient vector space, a vector $\vec{z} \in \R^{\Sigma^r_n(1)}$, and a distinguished generator $u_{\rho}$ of each ray $\rho \in \Sigma^r_n(1)$.  The inner product in our case is the dot product on $(\R^r/\R)^{\oplus n}$ in the basis $\{e_i^j\}_{i \in [n], j \in [r-1]}$, which we denote by $\ast$.  As for the vector $\vec{z}$, since the rays of $\Sigma^r_n$ are the cones $\sigma_{\tI}$ for each decorated set $\tilde{I}$, we can define $\vec{z}$ by setting
\begin{equation}
\label{eq:zI}
z_{\tilde{I}} := \delta^n_{|I|}
\end{equation}
for each decorated set $\tI$.  Finally, for the generator of the ray associated to $\tI$, we choose
\begin{equation}
\label{eq:uI}
u_{\tI}:= \sum_{i \in I} e_i^{-a(i)}.
\end{equation}
Equipped with this notation, the final perspective we present on the fan $\Sigma^r_n$ is the following.

\begin{proposition}
\label{prop:normalfan}
The polytopal complex $\Delta^r_n$ is the normal complex of the fan $\Sigma^r_n$ with respect to the inner product $\ast$, the vector $\vec{z}$ defined by \eqref{eq:zI}, and the ray generators defined by \eqref{eq:uI}.
\end{proposition}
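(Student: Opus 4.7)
I would verify the identification cone by cone, showing that for each maximal cone $\sigma_{\tbI}$ of $\Sigma^r_n$, its Nathanson--Ross normal polytope (with respect to the specified inner product, vector, and ray generators) coincides with the intersection $\Delta^r_n \cap \sigma_{\tbI}$. The desired identification $\Delta^r_n = \bigcup_{\tbI \text{ maximal}} (\Delta^r_n \cap \sigma_{\tbI})$ then matches the normal complex, which is likewise the union over maximal cones of their normal polytopes.

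First, I would identify the maximal cones of $\Sigma^r_n$. A maximal chain has the form $\tbI = (I_1 \subsetneq \cdots \subsetneq I_n, \a)$ with $|I_j| = j$ and $I_n = [n]$, which is equivalent data to a permutation $\pi$ of $[n]$ (giving $I_j = \{\pi(1), \ldots, \pi(j)\}$) together with a decoration $\a \colon [n] \to \Z_r$. As already established in the proof of Proposition~\ref{prop:tropical}, a point in $\sigma_{\tbI}$ may be written $x = \sum_{i=1}^n x_i\, e_i^{-\a(i)}$ with $x_{\pi(1)} \geq \cdots \geq x_{\pi(n)} \geq 0$.

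Next, I would describe $\Delta^r_n \cap \sigma_{\tbI}$ explicitly. The region of \eqref{eq:Deltarn} indexed by $a_i = -\a(i)$ is cut out, inside the product cone $\mathrm{Cone}\{e_i^{-\a(i)}\}_{i \in [n]}$, by $\sum_{i \in I} x_i \leq \delta^n_{|I|}$ for all $I \subseteq [n]$. Within $\sigma_{\tbI}$, where $x_{\pi(1)} \geq \cdots \geq x_{\pi(n)}$, the sum $\sum_{i \in I} x_i$ with $|I| = j$ is maximized by $I = I_j$; hence only the $n$ constraints $\sum_{i \in I_j} x_i \leq \delta^n_j$ (one per ray of $\sigma_{\tbI}$) are binding. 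Thus $\Delta^r_n \cap \sigma_{\tbI}$ is precisely the polytope in $\sigma_{\tbI}$ defined by these $n$ inequalities.

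The heart of the proof is to match these inequalities with the Nathanson--Ross truncation hyperplanes at the rays of $\sigma_{\tbI}$. Unwinding the definition of the normal complex from \cite{NR}, the hyperplane associated to the ray $\sigma_{\tI_j}$ of $\sigma_{\tbI}$ is determined by the generator $u_{\tI_j} = \sum_{i \in I_j} e_i^{-\a(i)}$, the truncation value $z_{\tI_j} = \delta^n_j$, and the inner product $\ast$, and cuts out a halfspace whose restriction to $\sigma_{\tbI}$ is a linear inequality $\phi_j(x) \leq z_{\tI_j}$. Using that the $n$ factors of $V_{\calA}$ are mutually orthogonal under $\ast$, together with the fact that in the cone one has $x = \sum_i x_i e_i^{-\a(i)}$, I would reduce the computation of $\phi_j(x)$ to a factor-by-factor calculation and verify that $\phi_j(x) = \sum_{i \in I_j} x_i$, exactly matching the constraint above. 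Taking the union over all maximal chains $\tbI$ then yields the full identification.

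The main obstacle will be the bookkeeping in this last step: the inner product $\ast$ is defined via the basis $\{e_i^j\}_{i \in [n], j \in [r-1]}$, which does not treat $e_i^0$ symmetrically with the other $e_i^j$'s, so the identification $\phi_j(x) = \sum_{i \in I_j} x_i$ must be verified uniformly across all decorations $\a$, including those $i$ for which $\a(i) = 0$. Showing that the normalization conventions chosen in \eqref{eq:zI} and \eqref{eq:uI} precisely absorb this asymmetry---so that the Nathanson--Ross construction produces the combinatorially natural constraints of $\Delta^r_n$---is where the real content of the proposition lies.
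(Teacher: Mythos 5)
Your proposal follows essentially the same route as the paper's proof: restrict to the maximal cones $\sigma_{\tbI}$, translate the Nathanson--Ross truncation at each ray generator $u_{\tI_j}$ into the inequality $\sum_{i \in I_j} x_i \le \delta^n_{|I_j|}$, and observe that the union over all maximal chains recovers exactly the set \eqref{eq:Deltarn}. If anything, you are more explicit than the paper on two points it compresses---the description of a maximal cone as the ordered region $x_{\pi(1)} \ge \cdots \ge x_{\pi(n)} \ge 0$ rather than a full orthant, and the fact that within such a cone the $n$ chain inequalities imply the constraints $\sum_{i \in I} x_i \le \delta^n_{|I|}$ for all $I \subseteq [n]$---while the factor-by-factor pairing computation you defer (including the $\a(i)=0$ bookkeeping you rightly flag as the delicate point) is likewise only asserted, not written out, in the paper's proof.
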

\begin{proof}
To define the normal complex of $\Sigma^r_n$, one first truncates all faces by normal hyperplanes.  Explicitly, for each face $\sigma_{\tbI}$ of $\Sigma^r_n$, let
\[P_{\tbI}:= \sigma_{\tbI} \cap \{v \in (\R^r/\R)^{\oplus n} \; | \; v \ast u_{\tbI} \leq z_{\tbI} \; \text{ for all } \rho \in \sigma_{\tbI}(1)\}.\]
Then the normal complex, by definition, is the union of all faces of the polytopes $P_{\tbI}$, over all cones $\sigma_{\tbI}$ of $\Sigma^r_n$.  Because we include faces in this union, it suffices to consider only maximal cones, which are those associated to chains $\tbI = (I_1, \ldots, I_n; \a)$ of length $n$.  For such chains, we have
\[\sigma_{\tbI} = \{x_1 e_1^{-\a(1)} + \cdots + x_n e_n^{-\a(n)} \; | \; x_i \in \R^{\geq 0} \; \text{ for all } i\},\]
and the rays $\rho \in \sigma_{\tbI}(1)$ are the cones generated by $u_{(I_j, \a|_{I_j})}$ for $j \in [n]$.  Thus, the inequalities in the definition of $P_{\tbI}$ amount to the condition that
\[\sum_{i \in I_j} x_i \leq \delta^n_{|I_j|}\]
for all $j \in [n]$.  As $\tbI$ ranges over all maximal chains, the exponents $\a(i)$ range over all elements of $\Z_r$ and the sets $I_j$ range over all subsets of $[n]$, so the normal complex precisely coincides with the set $\Delta^r_n$ of \eqref{eq:Deltarn}.
\end{proof}

\newpage

\bibliographystyle{alpha}
\bibliography{bibliography.bib}

\end{document}